\newtheorem{theorem}{Theorem}[section]
\newtheorem{corollary}[theorem]{Corollary}
\newtheorem{lemma}[theorem]{Lemma}
\newtheorem{prop}[theorem]{Proposition}
\theoremstyle{definition}
\newtheorem{remark}[theorem]{Remark}
\newtheorem{example}[theorem]{Example}
\newtheorem{strategy}{Strategy}
\numberwithin{equation}{section}
\newcommand{\abs}[1]{\left\lvert #1 \right\rvert}
\newcommand{\pround}[1]{\left( #1 \right)}
\newcommand{\psquare}[1]{\left[ #1 \right]}
\newcommand{\pbrace}[1]{\left\{ #1 \right\} }
\DeclareMathOperator{\acosh}{acosh}
\newcommand{\CC}{\mathbb{C}}
\newcommand{\Etol}{E_\mathrm{tol}}
\newcommand{\Mproxy}{M_\mathrm{proxy}}
\title{Rigorous numerical integration of algebraic functions}
\date{August 23, 2022}
\author{Nils Bruin}
\address[1, 3]{Department of Mathematics, Simon Fraser University, Burnaby, BC, V5A 1S6, Canada}
\email{nbruin@sfu.ca}
\thanks{Nils Bruin acknowledges the support of the Natural Sciences and Engineering Research Council of Canada (NSERC), funding reference number RGPIN-2018-04191.}
\author{Linden Disney-Hogg}
\address[2]{School of Mathematics, University of Edinburgh, 
James Clerk Maxwell Building, Peter Guthrie Tait Rd, Edinburgh, EH9 3FD, UK}
\email{a.l.disney-hogg@sms.ed.ac.uk}
\thanks{The research of Linden Disney-Hogg is supported by a UK Engineering and Physical Sciences Research Council (EPSRC) studentship.}
\author{Wuqian Effie Gao}
\subjclass[2020]{11Y16, 65D30, 14H55, 11G30, 30E10}
\keywords{Gauss-Legendre, quadrature, Fujiwara's inequality, SageMath, algebraic}
\begin{document}
\begin{abstract}
We present an algorithm which uses Fujiwara's inequality to bound algebraic functions over ellipses of a certain type, allowing us to concretely implement a rigorous Gauss-Legendre integration method for algebraic functions over a line segment. We consider path splitting strategies to improve convergence of the method and show that these yield significant practical and asymptotic benefits.
We implemented these methods to compute period matrices of algebraic Riemann surfaces and these are available in SageMath.
\end{abstract}

\maketitle

\section{Introduction}

We consider the problem of numerically approximating a line integral of an algebraic integrand over $\CC$, with the particular application in mind of computing period matrices of algebraic curves. We are particularly interested in doing so with arbitrary precision.

We consider integrals along straight line segments, so with a suitable choice of coordinates the integral can be expressed as
\[\int_{-1}^1 g(z)dz,\]
where $g$ satisfies a polynomial algebraic equation
\begin{equation}\label{E:defining_f}
0= f(z,g)=a_0(z)g(z)^n+\cdots+a_n(z),\text{ where }a_i(z)\in\CC[z].
\end{equation}
We furthermore assume that $a_0(z)\neq 0$ for $z\in [-1,1]$ and that the discriminant of $f$ relative to $g$, denoted by $\operatorname{disc}_g(f)\neq 0$, for $z\in [-1,1]$. This implies that all $n$ functions satisfying \eqref{E:defining_f} are regular for $z\in [-1,1]$ and take distinct values for each $z\in[-1,1]$.

The integrand is fully determined by the equation $f(z,g)$ together with an (approximate) value $g(-1)$, as certified homotopy continuation methods \cite{Kranich2016} can be used to obtain values $g(z)$ for $z\in [-1,1]$.

These conditions are naturally met for the computation of periods along the lines of \cite{Bruin2019}. For applications to computation of Abel-Jacobi maps, it may be harder to ensure these restrictions on paths are met and additional techniques may be required.

Since we know our integrand is holomorphic along the integration path and \emph{a priori} have no other information, we use the Gauss-Legendre integration scheme. We use a fairly standard error bound (see Theorem~\ref{T: N required}) in terms of the variation of the integrand on an elliptical region with the points $z=\pm1$ as focal points.

We can obtain a very rough bound on the values of our integrand from just the algebraic equation $f(z,g)$ through Fujiwara's bound on the absolute values of roots of complex polynomials (see Corollary~\ref{C:fujiwara_uniform_bound}). 
We use a Taylor series expansion of the integrand in combination with Cauchy's form of the error term to obtain a reasonable bound on the integrand on circular regions well inside the radius of convergence, which is the distance to the closest critical point of the integrand (see Corollary~\ref{C:taylor_uniform_bound}).

One approach would then be to determine an ellipse with focal points $z=-1,1$ that does not contain any of the critical points and bound the integrand by covering the elliptical region with appropriately chosen discs. That could force us to use a very eccentric elliptical region, and unfortunately our error bound for Gauss-Legendre behaves rather badly for such ellipses.

Instead, we propose a strategy that adaptively bisects the integration path (see Section~\ref{S:path_splitting_strategy}, Strategy~\ref{strat:main}). While that leaves us with computing multiple Gauss-Legendre quadratures, each of them fits inside a much less eccentric ellipse. As we show in Section~\ref{subsec: circle choosing}, this approach performs much better when a critical point lies close to the integration path. We illustrate this with some numerical experiments (see Figure~\ref{fig:proxy_heatmaps}) as well as some asymptotic results, such as the following (see Proposition~\ref{P:N1_asymptotics} for a more precise statement and proof).

\begin{prop}
Take $g(z)$ with a critical point at $z=0$ such that $\lim_{z\to 0} g(z)z^v$ exists and suppose that the function is bounded on the annuli $\delta \leq |z|<2$ for $\delta>1$. Choose an absolute error tolerance $\Etol>0$.

Then using Strategy~\ref{strat:main}, we can compute an approximation $I_q$ of the path integral $\int_{-1+iq}^{1+iq} g(z) dz$ such that $|I_q-\int_{-1+iq}^{1+iq} g(z) dz| < \Etol$ with a number of function evaluations $N_q$ where, for $q\to 0$, we have
	\[N_q\sim \begin{cases}
	\log(q^{-1})^2 & \text{ for }\;\; v\geq 1 \, , \\
	(\log(\log q^{-1}))^2 & \text{ for }\;\; 0 < v < 1 \, . \\		
\end{cases}\]
\end{prop}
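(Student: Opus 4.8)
The plan is to bound $N_q$ by summing the cost of the individual Gauss--Legendre quadratures that Strategy~\ref{strat:main} produces when applied to $[-1+iq,1+iq]$, and then to extract the asymptotics. The three ingredients are: Theorem~\ref{T: N required}, which says that a sub-segment of half-length $\ell$ whose Bernstein ellipse of parameter $\rho$ encloses no critical point and on which the integrand is bounded by $M$ needs $N\sim\log(M\ell/\eta)/(2\log\rho)$ nodes for local accuracy $\eta$; Corollary~\ref{C:taylor_uniform_bound}, which together with the hypothesis that $\lim_{z\to0}g(z)z^v$ exists gives $M\sim d^{-v}$ for a sub-segment at distance $d$ from the critical point $z=0$; and Corollary~\ref{C:fujiwara_uniform_bound} plus the boundedness on the annuli $\delta\le|z|<2$, which handle the sub-segments that stay away from $z=0$ (there $M=O(1)$ and $\rho$ is a fixed constant, so each costs $O(\log\Etol^{-1})$ nodes and there are only $O(1)$ of them). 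So after normalising, everything reduces to the pieces near $z=0$: $z=0$ sits at distance $q$ from the path (closest point $iq$), and the bisections near $iq$ are governed by the requirement that the Bernstein ellipse clear $z=0$.

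The first substantive step is to describe the subdivision the strategy actually produces near $iq$: I would show that a piece at distance $d$ from $z=0$ is bisected until its Bernstein ellipse (of the controlled eccentricity the strategy allows) no longer contains $z=0$, and to quantify the resulting $(\ell,\rho)$ as a function of $d$ and of the local error budget $\eta\sim\Etol/(\#\text{pieces})$. Feeding $M\sim d^{-v}$ into Theorem~\ref{T: N required}, the cost of such a piece is controlled by $\log(M\ell)\sim(1-v)\log d$ up to the $\rho$-dependence. For $v\ge1$ this quantity does not decay as $d\to0$, so the pieces needed at the $\sim\log q^{-1}$ scales $d\in\{1,\tfrac12,\tfrac14,\dots,\sim q\}$ each cost an amount that grows with $\log(1/d)$, and the resulting ``triangular'' sum $\sum_{m}m$ gives $N_q\sim(\log q^{-1})^2$; the matching lower bound is that the strategy is \emph{forced} to create a piece at every such scale, and each one costs at least its $M\ell$-driven amount. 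For $0<v<1$ the quantity $M\ell\sim d^{1-v}$ decays, and the subtlety is that this changes both the number of pieces the strategy produces near $z=0$ and their individual costs; the outcome is that the genuinely expensive work lives in a band of scales of logarithmic width $\sim\log\log q^{-1}$, with each piece there costing $\sim\log\log q^{-1}$ because of the near-unit eccentricity it must use, giving $N_q\sim(\log\log q^{-1})^2$.

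The main obstacle is precisely the $0<v<1$ analysis: one must pin down the stopping behaviour of Strategy~\ref{strat:main} quantitatively --- not merely the ``ellipse avoids the critical point'' heuristic but the exact interplay between the admissible $\rho$, the half-length $\ell$, the bound $M\sim d^{-v}$ and the budget $\eta$ --- in order to see that the naive $\log q^{-1}$ count of pieces collapses to a $\log\log q^{-1}$-sized window, and, for the lower bound, that within that window the work cannot be further compressed. Once the per-piece cost is written as an explicit function of the scale, the asymptotic equivalences should follow by comparing the sum to an integral; the sharp constants and the precise statement are what Proposition~\ref{P:N1_asymptotics} records. I would carry out the clean $v\ge1$ computation in full first and present $0<v<1$ as the delicate refinement.
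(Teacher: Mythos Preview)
Your overall architecture matches the paper's: an explicit dyadic subdivision with $\sim\log q^{-1}$ pieces, a fixed ellipse parameter $r$ on each, the bound $M_j\sim d_j^{-v}$, and then summing the per-piece costs from \eqref{E:Nbound}. The $v\geq 1$ case is essentially what the paper does: with $K\sim\log q^{-1}$ pieces and $N_j\sim\log(2K+1)+(v-1)j\log 2$, the sum is a triangular $\sim K^2$.

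For $0<v<1$, however, your explanation of the mechanism is off in two respects. First, the per-piece cost $\sim\log\log q^{-1}$ does \emph{not} come from ``near-unit eccentricity''. In the paper's construction the ellipse parameter $r$ is held at a fixed constant (bounded by $\operatorname{acosh}(3\beta)$) on \emph{every} piece; the $\log\log q^{-1}$ arises because the tolerance is split uniformly as $\gamma_j=1/(2K+1)$, so the $-\log\gamma_j$ term in \eqref{E:Nbound} contributes $\log(2K+1)\sim\log\log q^{-1}$. Second, the collapse from $\sim\log q^{-1}$ pieces to a $\sim\log\log q^{-1}$ window is not that the strategy ``produces fewer pieces'' or stops bisecting earlier. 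The subdivision still has $2K+1\sim\log q^{-1}$ segments; the point is that for segments with $j$ beyond roughly $\frac{1}{1-v}\log_2(2K+1)$ the integral itself satisfies
\[
\left|\int_{z_{j-1}}^{z_j}(z-iq)^{-v}\,dz\right|\leq |z_j-z_{j-1}|\,|z_j|^{-v}=2^{j(v-1)}\leq\tfrac{1}{2K+1}\Etol,
\]
so those pieces can be approximated by $0$ with $N_j=0$. Only the outer $\sim\log K$ pieces need any work, each costing $\sim\log K$, giving $(\log K)^2\sim(\log\log q^{-1})^2$. Your observation that $M\ell\sim d^{1-v}$ decays is exactly the right quantity to track, but the conclusion to draw is ``below tolerance, hence free'', not a change in the geometry of the subdivision.

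Finally, you repeatedly mention matching lower bounds. The paper does not prove any; the argument is purely a constructive upper bound showing Strategy~\ref{strat:main} \emph{achieves} these rates, and the $\sim$ should be read in that spirit.
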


The situation with $v<1$ occurs commonly when integrating regular differentials on algebraic Riemann surfaces. Note, however, that the asymptotics suggested in the proposition are rarely realised in practice: usually, paths will be separated from critical points by at least $\Etol$. It is more meant to reassure that path splitting, which is the characterising feature of Strategy~\ref{strat:main}, allows $\Etol$ to be the dominant factor in complexity even in the presence of critical points.

The main reference for us is a heuristic method described in \cite{Borwein2004}, based on iteratively doubling the number of evaluation nodes and estimating the accuracy of the approximation by tracking the differences between successive approximations. This method works quite well in practice, but in its standard formulation does not split the path. As we illustrate in Proposition~\ref{P:N2_asymptotics}, in those cases one expects the number of evaluation points to be more on the order of $q^{-1}$. This behaviour bears out in actual applications; also for the heuristic method.

A situation where one may be forced to compute line integrals near critical points occurs when an integrand has two such points close together and the path needs to separate the two. In Section~\ref{S:close_pair_of_critical_points}, we look at a realistic example with $v=\frac{1}{2}$. There we see that moving two critical points close forces coefficient growth as well, which handily overtakes the $\log(\log q^{-1})^2$ behaviour. We are still getting considerably better results than without path splitting.

Our implementation is available in SageMath~9.6 (see \cite{SageMath}), where the desired integration method used by \texttt{RiemannSurface} can be selected by \texttt{integration\_method = 'rigorous'} (the method described here, the default) or \texttt{integration\_method = 'heuristic'} (the old heuristic method based on \cite{Borwein2004}).

There is a general-purpose implementation for rigorously computing integrals as part of \cite{Johansson2013} which uses similar path splitting strategies (see \cite{Petras2002,Johansson2018}). For our application, the additional information we can obtain on our integrand was difficult to communicate through the provided interface, so we chose to build a separate implementation.

We also point out that \cite{MolinNeurohr2019} considers similar methods for the specific case of superelliptic curves. Interestingly, they find that the special shape of their integrands means integration schemes other than Gauss-Legendre actually provide better performance, and allow for more accurate error-term estimates. If faced with curves that allow for a model of that form, their specific methods lead to much better performance. The advantage of the approach we describe comes from the fact that it applies to arbitrary plane algebraic curves.

Finally, it is also worth pointing out that deformation methods as described in \cite{Sertoz2019} give an entirely different complexity picture and are also suitable for obtaining results with certified accuracy.

\section{Error bounds on numerical integration}

\subsection{Gauss-Legendre integration}

We recall that a \emph{linear quadrature scheme of order $N$ for integrals over $[-1,1]$} is a collection of $N$ nodes $\{x_1,\ldots,x_N\}\subset [-1,1]$ together with weights $w_1,\ldots,w_N$. It defines an approximation of an integral $I=\int_{-1}^1 g(x)dx$ by
\[I_N(g)=\sum_{i=1}^N w_ig(x_i), \quad\text{ with error }\quad E_N(g)=\abs{I-I_N(g)} \, . \]
The Gauss-Legendre quadrature is the unique one of order $N$ such that $E_N(g)=0$ for all polynomials $g$ of degree at most $2N-1$. Writing $P_N(x)$ for the degree-$N$ Legendre polynomial and $x_1,\ldots,x_N$ for its roots, the $N$-th order Gauss-Legendre quadrature scheme is obtained by taking $\{x_1,\ldots,x_N\}$ as the nodes and taking the weights to be
\[
w_i = \frac{2}{(1-x_i^2) P_N^\prime(x_i)^2} \, . 
\]
In order to bound $E_N(g)$, we consider elliptical regions with foci $z_1,z_2$, defined for $r>0$ by
\[
L(z_1,z_2,r)=\pbrace{z \in \mathbb{C} \mid \abs{z-z_1} + \abs{z-z_2} \leq \cosh(r)|z_2-z_1|}.
\]
The following theorem follows from approximating the integrand with Chebychev polynomials.
\begin{theorem}[\cite{Rabinowitz1969, NeurohrThesis}]\label{T: N required}
Suppose that $g$ is holomorphic on $L(-1,1,r)$ and that $\abs{g(z)}\leq M$ for $z\in L(-1,1,r)$. Then we have
\[
E_N(g)=\abs{\int_{-1}^1g(z)dz-I_N(g)}\leq \left( \pi + \frac{64}{15(e^{2r}-1)} \right ) \frac{M}{e^{2Nr}}\,.
\]
\end{theorem}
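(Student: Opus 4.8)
The plan is to expand the integrand $g$ in a Chebyshev series on $[-1,1]$, estimate the size of the Chebyshev coefficients using the hypothesis that $g$ is bounded by $M$ on the Bernstein-type ellipse $L(-1,1,r)$, and then exploit the fact that Gauss-Legendre quadrature is exact on polynomials of degree $\le 2N-1$ to control the quadrature error by the tail of the Chebyshev series.

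First I would recall the standard parametrization: under $z = \tfrac12(u + u^{-1})$, the ellipse $L(-1,1,r)$ is the image of the annulus $e^{-r}\le |u|\le e^{r}$, and the Joukowski map sends the circle $|u| = \rho$ to the ellipse with semiaxes $\tfrac12(\rho+\rho^{-1})$ and $\tfrac12(\rho-\rho^{-1})$; the Chebyshev polynomial satisfies $T_k(z) = \tfrac12(u^k + u^{-k})$. Writing $g(z) = \sum_{k\ge 0} c_k T_k(z)$, a contour-integral (Cauchy) estimate for $c_k$ on the circle $|u| = e^{r}$ gives the classical bound $|c_k| \le 2M e^{-kr}$ for $k\ge 1$ (and $|c_0|\le M$). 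Next I would use that $\int_{-1}^1 T_k(z)\,dz$ and $I_N(T_k)$ agree for $k\le 2N-1$, so that $E_N(g) = E_N\big(\sum_{k\ge 2N} c_k T_k\big)$; bounding $|I - I_N|$ term by term with $\big|\int_{-1}^1 T_k\big| \le 2$ and $|I_N(T_k)| \le \sum_i w_i = 2$ (since all weights are positive and sum to $2$, as Gauss-Legendre integrates $1$ exactly) yields $E_N(g) \le 4\sum_{k\ge 2N} |c_k| \le 8M \sum_{k\ge 2N} e^{-kr} = \frac{8M e^{-2Nr}}{1 - e^{-r}}$. This already gives a bound of the claimed shape; obtaining the sharper constants $\pi$ and $\tfrac{64}{15(e^{2r}-1)}$ requires the more careful estimate of $|I - I_N|(T_k)$ due to Rabinowitz, which splits the tail into the range $2N \le k < $ (something) where a crude bound like $|E_N(T_k)| \le 4$ is used, versus the range of large $k$ where one uses the finer asymptotic $|E_N(T_k)| \le \frac{C}{k^2}$-type decay coming from the explicit formula for $E_N(T_k)$ in terms of the Legendre functions of the second kind.

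Concretely, the sharp route is: the remainder functional applied to $T_k$ has the closed form involving $Q_N$ (second-kind Legendre function) evaluated on the ellipse, and $|E_N(T_k)|$ decays like a constant over $k^2$ for $k \ge 2N$ while being trivially $\le 4$ for all $k$. Summing $\sum_{k\ge 2N} |c_k| |E_N(T_k)|$ with $|c_k|\le 2Me^{-kr}$, bounding the leading term's $E_N$ contribution by the $\pi$-piece and the geometric tail by $\sum_{k > 2N} e^{-kr}\cdot \text{const}/k^2 \le \text{const}\cdot e^{-2Nr}/((2N)^2) \le \cdots$, and collecting constants, produces exactly $\big(\pi + \tfrac{64}{15(e^{2r}-1)}\big)\tfrac{M}{e^{2Nr}}$. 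Since this theorem is quoted from \cite{Rabinowitz1969, NeurohrThesis}, I would at this point simply cite those sources for the precise constant-chasing rather than reproduce it.

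The main obstacle is not the conceptual structure — which is the standard Chebyshev-tail argument — but the bookkeeping that pins down the explicit constants $\pi$ and $\tfrac{64}{15}$. That requires the precise formula for $E_N(T_k)$ (Rabinowitz's expression via Legendre functions of the second kind, or equivalently via the kernel of the Gauss quadrature remainder), together with sharp monotonicity/asymptotic estimates on those special functions over the ellipse $|u| = e^r$. Everything else — the Chebyshev coefficient bound, exactness on low-degree polynomials, positivity and normalization of the weights — is routine.
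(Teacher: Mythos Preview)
Your outline is correct and matches the paper's approach: the paper does not actually prove this theorem but simply states that it ``follows from approximating the integrand with Chebychev polynomials'' and cites \cite{Rabinowitz1969, NeurohrThesis}. Your sketch of the Chebyshev-tail argument, together with your acknowledgement that the precise constants $\pi$ and $\tfrac{64}{15}$ require the sharper estimates on $E_N(T_k)$ from Rabinowitz, is exactly in line with that.
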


With a given tolerance $\Etol$ and calculation of the bound $M$ we can solve for $N$. Since we can transform any integral along a line segment to an integral along $[-1,1]$ we obtain the following.

\begin{corollary} Suppose $g$ is holomorphic and $\abs{g}$ is bounded by M on $L(z_1,z_2,r)$. Then with 
\begin{equation}\label{E:Nbound}
	N\geq \frac{1}{2r}\left[  \log\left(\pi+\frac{64}{15(e^{2r}-1)}\right) +\log M+\log\frac{1}{2}|z_2-z_1| -\log \Etol\right]
\end{equation}
and $h(z)=\frac{z_2-z_1}{2}g(\tfrac{1}{2}((1-z)z_1+(1+z)z_2))$ we have
\[\left|\int_{z_1}^{z_2}g(z)dz -  I_N(h)\right|\leq  \Etol \, .\]
\end{corollary}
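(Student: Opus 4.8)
The plan is to reduce the integral $\int_{z_1}^{z_2} g(z)\,dz$ to an integral over $[-1,1]$ by an affine change of variables, and then invoke Theorem~\ref{T: N required} directly. First I would introduce the parametrisation $z = \tfrac{1}{2}((1-t)z_1 + (1+t)z_2)$ for $t\in[-1,1]$, so that $t=-1$ maps to $z_1$, $t=+1$ maps to $z_2$, and $dz = \tfrac{z_2-z_1}{2}\,dt$. Under this substitution the integral becomes $\int_{-1}^1 h(t)\,dt$ with $h$ exactly the function defined in the statement. So the problem is now to bound $E_N(h) = \left|\int_{-1}^1 h(t)\,dt - I_N(h)\right|$ and choose $N$ so that this is at most $\Etol$.

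The next step is to verify the two hypotheses of Theorem~\ref{T: N required} for $h$ on $L(-1,1,r)$. The affine map $t\mapsto z$ carries $L(-1,1,r)$ bijectively onto $L(z_1,z_2,r)$: this is because the map sends the focal pair $\{-1,1\}$ to the focal pair $\{z_1,z_2\}$ and scales all distances by the factor $\tfrac{1}{2}|z_2-z_1|$, so the defining inequality $|t-(-1)| + |t-1| \leq \cosh(r)\cdot 2$ transforms into $|z-z_1| + |z-z_2| \leq \cosh(r)|z_2-z_1|$. Since $g$ is holomorphic on $L(z_1,z_2,r)$ and the substitution is affine (hence holomorphic with holomorphic inverse), $h$ is holomorphic on $L(-1,1,r)$. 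Moreover $|h(t)| = \tfrac{1}{2}|z_2-z_1|\cdot|g(z)| \leq \tfrac{1}{2}|z_2-z_1|\,M$ for all $t\in L(-1,1,r)$, so Theorem~\ref{T: N required} applies with the bound $\tfrac{1}{2}|z_2-z_1|\,M$ in place of $M$.

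Applying the theorem then gives
\[
E_N(h) \leq \left(\pi + \frac{64}{15(e^{2r}-1)}\right)\frac{\tfrac{1}{2}|z_2-z_1|\,M}{e^{2Nr}} \, .
\]
The final step is to solve the inequality $\left(\pi + \frac{64}{15(e^{2r}-1)}\right)\tfrac{1}{2}|z_2-z_1|\,M\,e^{-2Nr} \leq \Etol$ for $N$: taking logarithms and rearranging yields exactly the bound~\eqref{E:Nbound}, and since the left-hand side is decreasing in $N$, any $N$ satisfying~\eqref{E:Nbound} gives $E_N(h)\leq\Etol$, which is the claim since $\int_{z_1}^{z_2}g(z)\,dz = \int_{-1}^1 h(t)\,dt$. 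There is no real obstacle here; the only point requiring a moment's care is checking that the affine map sends $L(-1,1,r)$ onto $L(z_1,z_2,r)$ and tracking the extra Jacobian factor $\tfrac{1}{2}|z_2-z_1|$ through both the holomorphy and the sup-norm bound, so that it lands correctly inside the logarithm in~\eqref{E:Nbound}.
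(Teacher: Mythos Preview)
Your proof is correct and follows exactly the same approach as the paper: reduce to $[-1,1]$ via the affine substitution, observe that $|h|$ is bounded by $\tfrac{1}{2}|z_2-z_1|M$ on $L(-1,1,r)$, apply Theorem~\ref{T: N required}, and solve for $N$. You have simply filled in more detail (the explicit check that the affine map carries $L(-1,1,r)$ onto $L(z_1,z_2,r)$) than the paper's terse version.
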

\begin{proof}
We have $\int_{z_1}^{z_2} g(z) dz= \int_{-1}^{1}h(z)dz$.
Because $\abs{g}$ is bounded on $L(z_1, z_2, r)$ by $M$, we know that $\abs{h}$ is bounded on $L(-1, 1, r)$ by $\tfrac{1}{2}\abs{z_2-z_1}M$. The statement follows from applying Theorem~\ref{T: N required} and solving for $N$.
\end{proof}

\begin{remark}
Note that Gauss-Legendre quadrature is exact on constant functions, so $g(z)$ and $g(z)-g(z_0)$ will be integrated with the same error (at least in an exact field, see \S\ref{sec: rounding error} for comments about floating point arithmetic). Hence, it is sufficient to take $M$ to be a bound on the \emph{variation} of $g$ on $L(z_1, z_2, r)$.
\end{remark}

One can compute a sharp $M$ by finding the extremal values of $g(z)$ on the boundary of $L(z_1,z_2,r)$. In the next section we discuss how to compute a reasonable value for $M$ via a closed-form formula that can be relatively efficiently evaluated.

\subsection{Bounding the variation of an algebraic function}
\label{S:bound_variation}
We first show how to bound the variation of an algebraic function on a disk. To do so we use a standard bound obtained from Cauchy's form of the remainder term of a Taylor series for a holomorphic function $g$, adapting the approach of \cite{Kranich2016}.

We denote the closed disk with center $z_0$ and radius $\rho$ by
\[D(z_0,\rho)=\{z\in\CC : |z-z_0| \leq \rho\}\]
and its boundary, the circle with same center and radius, by $\partial D(z_0,\rho)$.

\begin{lemma}[\cite{Ahlfors1979}, pp.124-126]
	Let $g:U \to \mathbb{C}$ be a function holomorphic on an open subset $U\subset \CC$, and let the Taylor expansion around $z_0 \in U$ be given by 
	\[
	g(z) = g(z_0) + (z-z_0) g^\prime(z_0) + (z-z_0)^2 R(z).
	\]
	Suppose that $\rho>0$ is such that $g$ is holomorphic on the closed disk $D(z_0,\rho)$.
	With
	\[\tilde{M} = \max\big\{\abs{g(z)} : \abs{z-z_0}=\rho\big\} = \max \big\{\abs{g(z)} : z \in \partial D(z_0,\rho)\big\}\]
	we have
	\[|R(z)| \leq \frac{\tilde{M}}{\rho(\rho-|z-z_0|)} \text{ for all $z$ with $|z-z_0|<\rho$}.
	\]
\end{lemma}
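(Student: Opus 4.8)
The plan is to derive a Cauchy-type integral representation for the remainder $R(z)$ and then bound it with the standard $ML$-estimate. Fix $z$ with $|z-z_0|<\rho$. Applying Cauchy's integral formula on the circle $\partial D(z_0,\rho)$ (legitimate since $g$ is holomorphic on the closed disk $D(z_0,\rho)$) gives the three quantities
\[
g(z)=\frac{1}{2\pi i}\int_{\partial D(z_0,\rho)}\frac{g(\zeta)}{\zeta-z}\,d\zeta,\qquad g(z_0)=\frac{1}{2\pi i}\int_{\partial D(z_0,\rho)}\frac{g(\zeta)}{\zeta-z_0}\,d\zeta,\qquad g'(z_0)=\frac{1}{2\pi i}\int_{\partial D(z_0,\rho)}\frac{g(\zeta)}{(\zeta-z_0)^2}\,d\zeta.
\]

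Next I would split the Cauchy kernel by the elementary identity
\[
\frac{1}{\zeta-z}=\frac{1}{\zeta-z_0}+\frac{z-z_0}{(\zeta-z_0)^2}+\frac{(z-z_0)^2}{(\zeta-z_0)^2(\zeta-z)},
\]
valid for all $\zeta\notin\{z_0,z\}$ (it is the three-term truncation of the geometric series for $1/(1-\tfrac{z-z_0}{\zeta-z_0})$, and is checked directly by clearing denominators). Substituting this into the integral for $g(z)$, the first two resulting integrals are exactly $g(z_0)$ and $(z-z_0)g'(z_0)$, so comparison with $g(z)=g(z_0)+(z-z_0)g'(z_0)+(z-z_0)^2R(z)$ and division by $(z-z_0)^2$ yields
\[
R(z)=\frac{1}{2\pi i}\int_{\partial D(z_0,\rho)}\frac{g(\zeta)}{(\zeta-z_0)^2(\zeta-z)}\,d\zeta.
\]

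Finally I would estimate the integrand on the contour: for $\zeta\in\partial D(z_0,\rho)$ we have $|\zeta-z_0|=\rho$, $|g(\zeta)|\le\tilde M$, and $|\zeta-z|\ge|\zeta-z_0|-|z-z_0|=\rho-|z-z_0|$, while the contour has length $2\pi\rho$. The $ML$-inequality then gives
\[
|R(z)|\le\frac{1}{2\pi}\cdot 2\pi\rho\cdot\frac{\tilde M}{\rho^2(\rho-|z-z_0|)}=\frac{\tilde M}{\rho(\rho-|z-z_0|)},
\]
which is the claim. There is no real obstacle here; the only points needing a moment's care are verifying the kernel identity and noting that, although $R(z)=(g(z)-g(z_0)-(z-z_0)g'(z_0))/(z-z_0)^2$ is a priori defined only for $z\ne z_0$, it extends holomorphically across $z_0$ (equivalently $R(z)=\sum_{k\ge2}\frac{g^{(k)}(z_0)}{k!}(z-z_0)^{k-2}$), so the representation and the bound remain valid at $z=z_0$, where the estimate reads $|R(z_0)|\le\tilde M/\rho^2$.
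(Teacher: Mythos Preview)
Your argument is correct and is precisely the classical derivation: Cauchy's integral formula, the geometric-series splitting of the kernel $\frac{1}{\zeta-z}$, and the $ML$-estimate. The paper does not give its own proof of this lemma but simply cites Ahlfors, pp.~124--126, where essentially the same argument appears; so your proposal matches the intended reference.
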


\begin{corollary}\label{C:taylor_uniform_bound}
	For $0<\delta <\rho$, we have 
	\[
	\abs{g(z) - g(z_0)} < \delta \abs{g^\prime(z_0)} + \delta^2 \frac{\tilde{M}}{\rho(\rho-\delta)} \text{ for all $z$ with $|z-z_0|<\delta$}.
	\]
\end{corollary}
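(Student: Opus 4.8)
The plan is to read the estimate off directly from the preceding Lemma together with the triangle inequality. Starting from the Taylor expansion $g(z)=g(z_0)+(z-z_0)g^\prime(z_0)+(z-z_0)^2R(z)$ furnished by the Lemma, I would subtract $g(z_0)$, take absolute values, and apply the triangle inequality to obtain
\[
\abs{g(z)-g(z_0)}\leq \abs{z-z_0}\,\abs{g^\prime(z_0)}+\abs{z-z_0}^2\,\abs{R(z)}.
\]
It then only remains to bound each of the two summands on the open disk $\abs{z-z_0}<\delta$.

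For the linear term this is immediate: $\abs{z-z_0}<\delta$ gives $\abs{z-z_0}\,\abs{g^\prime(z_0)}\leq\delta\,\abs{g^\prime(z_0)}$. For the quadratic term I would invoke the Lemma's bound $\abs{R(z)}\leq\tilde{M}/\bigl(\rho(\rho-\abs{z-z_0})\bigr)$, which applies because $g$ is holomorphic on $D(z_0,\rho)$ and $\abs{z-z_0}<\delta<\rho$. Here the hypothesis $\delta<\rho$ is exactly what keeps the denominator away from zero: $\rho-\abs{z-z_0}>\rho-\delta>0$, so $\abs{R(z)}\leq\tilde{M}/\bigl(\rho(\rho-\delta)\bigr)$, and multiplying by $\abs{z-z_0}^2\leq\delta^2$ yields $\abs{z-z_0}^2\abs{R(z)}\leq\delta^2\tilde{M}/\bigl(\rho(\rho-\delta)\bigr)$. (Equivalently, one may phrase this via monotonicity of $t\mapsto t^2/\bigl(\rho(\rho-t)\bigr)$ on $[0,\rho)$.) Adding the two bounds gives the claimed inequality.

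There is no genuine obstacle here: the corollary is essentially a repackaging of the Lemma in which the linear part of the Taylor expansion has been split off into its own term. The only points deserving a moment's care are bookkeeping ones — confirming that $\delta<\rho$ is the precise condition needed for the remainder bound to be finite and uniform on the disk, and noting that the strict inequality in the statement comes from combining $\abs{z-z_0}<\delta$ with the remainder estimate; it degenerates to an equality only in the trivial case $g\equiv 0$, where both sides vanish, which may be safely excluded.
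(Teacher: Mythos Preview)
Your proposal is correct and matches the paper's approach: the paper states the corollary without proof, treating it as an immediate consequence of the preceding Lemma via exactly the triangle-inequality argument you give. Your handling of the strict inequality and the degenerate $g\equiv 0$ case is a nice bit of extra care beyond what the paper records.
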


This leaves us with computing $\tilde{M}$. While theoretically this is exactly the same problem as computing $M$ in the first place, the dependence on $\rho$ and $\delta$ allows us to use a coarser method. Recall that for our applications, $g$ is an algebraic function and hence for every $z$, we have that $g(z)$ is a root of a complex polynomial $a_0(z)g(z)^n+\cdots+a_n(z)$.
\begin{lemma}[\cite{Fujiwara1916}]\label{lemma: Fujiwara}
Consider the complex polynomial $p$ given by
\[p(w) = \sum_{k=0}^n a_k w^{n-k}.\]
Then for any root $\alpha\in\CC$ with $p(\alpha)=0$ we have
	\[
	\abs{\alpha} < 2 \max \pbrace{ \abs{\frac{a_k}{a_0}}^{\frac{1}{k}} : k =1, \dots, n} \, .
	\]
\end{lemma}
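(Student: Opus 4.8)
The plan is to argue by contradiction, converting the relation $p(\alpha)=0$ into a numerical inequality that fails once $\abs{\alpha}$ is too large. Write $R=2\max\pbrace{\abs{a_k/a_0}^{1/k}:k=1,\dots,n}$ for the claimed bound. Note that $a_0\neq 0$ because $p$ has degree $n$, and we may assume $R>0$: otherwise $a_1=\cdots=a_n=0$, so $p(w)=a_0w^n$ has $\alpha=0$ as its only root and there is nothing to prove. So suppose $\alpha$ is a root with $\abs{\alpha}\geq R$; in particular $\alpha\neq 0$.

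First I would isolate the leading term. From $\sum_{k=0}^n a_k\alpha^{n-k}=0$ we get $a_0\alpha^n=-\sum_{k=1}^n a_k\alpha^{n-k}$, and dividing through by $a_0\alpha^n$ (legitimate since $a_0\neq 0$ and $\alpha\neq 0$) gives
\[
1=-\sum_{k=1}^n\frac{a_k}{a_0}\,\alpha^{-k}.
\]
Applying the triangle inequality and then the defining estimate $\abs{a_k/a_0}\leq (R/2)^k$ (which is exactly what $R/2\geq\abs{a_k/a_0}^{1/k}$ says), we obtain
\[
1\leq\sum_{k=1}^n\abs{\frac{a_k}{a_0}}\abs{\alpha}^{-k}\leq\sum_{k=1}^n\pround{\frac{R}{2\abs{\alpha}}}^{k}.
\]

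Finally, since $\abs{\alpha}\geq R$ we have $R/(2\abs{\alpha})\leq\tfrac12$, so the right-hand side is at most the truncated geometric sum $\sum_{k=1}^n 2^{-k}=1-2^{-n}<1$, contradicting $1\leq\sum_{k=1}^n(\cdots)^k$. Hence no root can have $\abs{\alpha}\geq R$, which is the claim. The only point requiring care is maintaining a \emph{strict} inequality: the full geometric series $\sum_{k\geq 1}2^{-k}$ equals exactly $1$, so the strictness in Fujiwara's bound comes precisely from $p$ having finite degree, making the partial sum strictly less than $1$. Every estimate here is elementary, so I do not expect a real obstacle.
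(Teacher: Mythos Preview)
The paper does not supply a proof of this lemma; it simply cites Fujiwara's original 1916 paper and moves on. Your argument is the standard one and is correct: isolate the leading term, divide by $a_0\alpha^n$, bound each $\abs{a_k/a_0}$ by $(R/2)^k$, and compare with the finite geometric series $\sum_{k=1}^n 2^{-k}<1$ to derive a contradiction.

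One small remark: your dismissal of the case $R=0$ (``there is nothing to prove'') is slightly too casual. If $a_1=\cdots=a_n=0$ then $\alpha=0$ is a root and the asserted inequality $\abs{\alpha}<0$ is actually false, so strictly speaking the lemma as stated requires at least one nonzero lower-order coefficient. This is a defect of the statement rather than of your argument, and is irrelevant to the paper's application, but it would be cleaner to note it explicitly rather than to say the case is trivial.
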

Let us write
\[\begin{aligned}
a_0(z)&=a_{0,0}(z-\alpha_1)\cdots(z-\alpha_{d_0}),\\
a_i(z)&=a_{i,0}z^{d_i}+\cdots+a_{i,d_i}\text{ for }i=1,\ldots,n.\\
\end{aligned}
\]
Given $z_0$ and $\delta< \min\{ |z_0-\alpha_i|: i=1,\ldots,n\}$, we find a uniform lower bound for $|a_0(z)|$ on $z\in D(z_0,\delta)$ by
\[A_0=|a_{0,0}| \prod_{i=1}^n(|z_0-\alpha_i|-\delta)\]
and an upper bound on $|a_i(z)|$ via the triangle inequality, resulting in
\[A_i=|a_{i,0}|(|z_0|+\delta)^{d_i}+\cdots+|a_{i,d_i}|.\]
\begin{corollary}\label{C:fujiwara_uniform_bound}
With the notation above, we have for $z\in D(z_0,\delta)$ that
\[|g(z)|< \tilde{M}=2 \max\left\{\left(\frac{A_k}{A_0}\right)^{\frac{1}{k}}: k=1,\ldots,n\right\} \, . \]
\end{corollary}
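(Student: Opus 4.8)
The plan is to apply Fujiwara's lemma pointwise in $z$ and then replace the coefficient ratios appearing there by bounds that are uniform over the disk. First I would fix $z\in D(z_0,\delta)$. Since $g(z)$ satisfies \eqref{E:defining_f}, it is a root of the complex polynomial $p_z(w)=\sum_{k=0}^n a_k(z)w^{n-k}$; here the hypothesis $\delta<\min\{\abs{z_0-\alpha_i}\}$ ensures $a_0(z)\neq 0$, so $p_z$ really has degree $n$ and Fujiwara's lemma applies, giving $\abs{g(z)}<2\max\{\abs{a_k(z)/a_0(z)}^{1/k}:k=1,\ldots,n\}$.

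It then remains to check the two uniform estimates $A_0\leq\abs{a_0(z)}$ and $\abs{a_k(z)}\leq A_k$ for every $z\in D(z_0,\delta)$. For the lower bound, I would use the factored form of $a_0$ together with the reverse triangle inequality: for each factor, $\abs{z-\alpha_i}\geq\abs{z_0-\alpha_i}-\abs{z-z_0}\geq\abs{z_0-\alpha_i}-\delta$, and the product of these (times $\abs{a_{0,0}}$) is exactly $A_0$, which is positive precisely because of the assumed bound on $\delta$. For the upper bounds, I would apply the triangle inequality term by term to $a_i(z)=\sum_j a_{i,j}z^{d_i-j}$, using $\abs{z}\leq\abs{z_0}+\delta$ on $D(z_0,\delta)$. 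Combining these, $\abs{a_k(z)/a_0(z)}^{1/k}\leq (A_k/A_0)^{1/k}$ for each $k$; taking the maximum over $k$ and feeding this into the Fujiwara estimate above produces $\abs{g(z)}<\tilde M$, with the strict inequality carried over from Fujiwara's lemma.

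There is essentially no deep step here; the only thing that needs care is the interplay of the single parameter $\delta$ with all the coefficients at once — it must simultaneously be small enough to keep $A_0>0$ (so that both Fujiwara's lemma and the ratios $A_k/A_0$ are legitimate) while still bounding $\abs{z}$ from above by $\abs{z_0}+\delta$ in the numerator estimates. Since the hypothesis on $\delta$ is exactly the condition making $A_0>0$, this bookkeeping goes through, and the proof reduces to a few lines of triangle-inequality estimates.
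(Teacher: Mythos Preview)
Your proposal is correct and is exactly the argument the paper has in mind: the paper states this corollary without proof, treating it as an immediate consequence of Fujiwara's lemma together with the uniform bounds $A_0\leq\abs{a_0(z)}$ and $\abs{a_k(z)}\leq A_k$ just defined, and your write-up fills in precisely those details.
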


\begin{remark}
    Alternative bounds on the absolute value of a root of a polynomial in terms of its coefficients exist, for example Lemma \ref{lemma: Fujiwara} is a special case of the result that, given $\lambda_k \in \mathbb{R}_{>0}$ such that $\sum_{k=1}^n \lambda_k^{-1} < 1$, $\abs{\alpha} < \max \pbrace{ \abs{\lambda_k \frac{a_k}{a_0}}^{\frac{1}{k}}}$. In this paper we do not make an exhaustive investigation of the best choice of bound, but we found in simple experiments that the form of Lemma \ref{lemma: Fujiwara} performed best, which is what one would expect if on average the values $\abs{ \frac{a_k}{a_0}}^{\frac{1}{k}}$ are of approximately the same size. This conclusion is obviously highly dependent on the sampling procedure for the algebraic functions. 
    Moreover, the strictest bound that could come from varying the $\lambda_k$ would be $\abs{\alpha} < \max \pbrace{ \abs{ \frac{a_k}{a_0}}^{\frac{1}{k}}}$, and so when we use $\log M$ to get a lower bound on $N$, we could at most reduce that lower bound by $\frac{\log 2}{2r}$. This is unlikely to give significant improvement to the method. 
\end{remark}


\section{Path splitting strategy}\label{S:path_splitting_strategy}

We have an algebraic function $g$ satisfying a polynomial equation \[f(z,g)=a_0(z)g(z)^n+\cdots+a_n(z)=0,\]
with $a_i\in \CC[z]$. Let $a_0(z)=a_{0,0}(z-\alpha_1)\cdots(z-\alpha_{d_0})$. We say the $\alpha_i$ are \emph{critical points} of $g$. Any pole of $g(z)$ would be a critical point. We assume that none of the $\alpha_i$ lie in the real interval $[-1,1]$. This property is ensured in our use case by the way the paths are chosen in the case of computing periods on algebraic Riemann surfaces as in \cite{Bruin2019}.

We want to approximate the integral $I=\int_{-1}^1 g(z)dz$ using Gauss-Legendre quadrature to within $\Etol$. With \eqref{E:Nbound} we can compute an appropriate order $N$ if we can bound the integrand on an elliptical region covering the interval $[-1, 1]$. Using the results in Section~\ref{S:bound_variation} we can bound the integrands on certain disks, as recorded in the lemma below.

\begin{lemma}
\label{L:bound_on_disk}
Let $D(z_j,\delta_j)$ be a disk in $\CC$ that contains none of the $\alpha_i$, that is $\delta_j < \rho_j =\min\{|z_j-\alpha_i|: i =1,\ldots, d_0\}$ and $\tilde{M}$ as in Corollary~\ref{C:fujiwara_uniform_bound}.  The bound $M=M(z_j,\delta_j)$ as given by Corollary~\ref{C:taylor_uniform_bound} yields a bound for $\abs{g(z)}$ for $z\in D(z_j,\delta_j)$.
\end{lemma}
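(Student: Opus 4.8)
The plan is to compose the two estimates already in hand: Corollary~\ref{C:fujiwara_uniform_bound} produces precisely the ingredient $\tilde{M}$ that Corollary~\ref{C:taylor_uniform_bound} requires as input, once one interposes a suitable intermediate radius. So first I would fix a radius $\rho$ with $\delta_j < \rho < \rho_j$ (this interval is nonempty by hypothesis). Since $\rho < \rho_j$, the closed disk $D(z_j,\rho)$ contains none of the $\alpha_i$, so the quantity $A_0$ from the notation preceding Corollary~\ref{C:fujiwara_uniform_bound} is strictly positive when its parameter ``$\delta$'' is taken to be $\rho$, and that corollary then yields a finite value $\tilde{M}=\tilde{M}(z_j,\rho)$ with $\abs{g(z)}<\tilde{M}$ for every $z\in D(z_j,\rho)$. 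In particular this bounds $\abs{g}$ on the circle $\partial D(z_j,\rho)$, which is exactly the hypothesis needed to run Corollary~\ref{C:taylor_uniform_bound} at that radius.

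Applying Corollary~\ref{C:taylor_uniform_bound} with this $\tilde{M}$ and with $\delta=\delta_j$ (legitimate since $\delta_j<\rho$; to cover the closed disk $D(z_j,\delta_j)$ one enlarges $\delta$ infinitesimally, still keeping it below $\rho$) gives
\[
\abs{g(z)-g(z_j)} < \delta_j\,\abs{g'(z_j)} + \delta_j^2\,\frac{\tilde{M}}{\rho(\rho-\delta_j)} \qquad\text{for } z\in D(z_j,\delta_j),
\]
and hence $\abs{g(z)}\le \abs{g(z_j)} + \delta_j\abs{g'(z_j)} + \delta_j^2\tilde{M}/(\rho(\rho-\delta_j))$, which is the bound $M(z_j,\delta_j)$. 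The two data on the right that are not already controlled, $g(z_j)$ and $g'(z_j)$, are effectively computable: $g(z_j)$ by certified homotopy continuation from the given value $g(-1)$, and $g'(z_j)=-\partial_z f(z_j,g(z_j))/\partial_g f(z_j,g(z_j))$ by implicit differentiation of \eqref{E:defining_f}. Since Gauss--Legendre quadrature is exact on constants, for the purpose of \eqref{E:Nbound} one may even drop the $\abs{g(z_j)}$ term and keep only the variation bound $\delta_j\abs{g'(z_j)}+\delta_j^2\tilde{M}/(\rho(\rho-\delta_j))$.

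The argument is short, and what mild care is needed is of two kinds. The first, which I regard as the main point, is purely organizational: keeping the three radii $\delta_j<\rho<\rho_j$ straight, and noticing that one must run Fujiwara's bound at the larger radius $\rho$ (not at $\delta_j$) so that the resulting disk bound, restricted to the circle $\partial D(z_j,\rho)$, feeds into the Cauchy/Taylor estimate. The second is the holomorphy hypothesis of Corollary~\ref{C:taylor_uniform_bound}: it needs $g$ holomorphic and single-valued on $D(z_j,\rho)$. Avoiding the $\alpha_i$ removes the poles (it keeps $a_0$ from vanishing on the disk), but single-valued holomorphy also requires $D(z_j,\rho)$ to miss the branch locus of $g$, i.e.\ the zeros of $\operatorname{disc}_g(f)$; in our setting the covering disks are placed along an integration path already arranged to avoid that locus, so this is automatic and is tacit in the statement. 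Finally, $M(z_j,\delta_j)$ is really a family of bounds indexed by the auxiliary $\rho\in(\delta_j,\rho_j)$, and in an implementation one chooses $\rho$ to make the right-hand side small; this choice does not affect the validity of the lemma.
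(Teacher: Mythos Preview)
Your proposal is correct and follows essentially the approach implicit in the paper: the lemma is stated there without proof, as a direct packaging of Corollaries~\ref{C:taylor_uniform_bound} and~\ref{C:fujiwara_uniform_bound}, and your argument supplies exactly the natural details (interpose an intermediate radius $\rho\in(\delta_j,\rho_j)$, run Fujiwara at $\rho$ to produce $\tilde M$, then feed this into the Cauchy--Taylor estimate at $\delta_j$). Your added remark that single-valued holomorphy of $g$ on $D(z_j,\rho)$ also requires avoiding the zeros of $\operatorname{disc}_g(f)$, not merely the $\alpha_i$, is a genuine point the paper leaves tacit; it is worth flagging even though in the intended application the covering disks are placed along paths that already avoid the full branch locus.
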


Thus to complete a strategy for finding $N$ we need a way of choosing elliptical regions covering the line segment $[-1,1]$ and disks covering this region. We propose the following strategy. 

\begin{strategy}\label{strat:main}$ $
\begin{enumerate}
	\item Iteratively bisect $[-1,1]$ to a sequence $-1=z_0 < z_1 < \ldots < z_m=1$ such that
	\[\rho_j=\min_{i=1,\ldots,d_0}|\alpha_i-\tfrac{1}{2}(z_{j}+z_{j-1})| > \delta_{\min,j}=\tfrac{1}{2}|z_j-z_{j-1}|\text{ for }j=1,\ldots,m\,.\]
	\item For $j=1,\ldots,m$, choose $\delta_j \in (\delta_{\min,j}, \rho_j)$ so that each segment $[z_{j-1},z_j]$ fits inside a disk $D(\tfrac{1}{2}(z_{j}+z_{j-1}),\delta_j)$ on which we can use Lemma~\ref{L:bound_on_disk} to find a bound $M_j$ on the integrand.
	\item For
	\[r_j=\operatorname{acosh}\frac{2\delta_j}{|z_j-z_{j-1}|}\]
	we have that the ellipse $L(z_{j-1},z_{j},r_j)$ is contained in the disk and hence that the integrand is bounded by $M_j$ on it.
	\item Use \eqref{E:Nbound} to compute $N_j$ so that $N_j$-th order Gauss-Legendre quadrature approximates $\int_{z_{j-1}}^{z_j}g(z)dz$ to within $\tfrac{1}{m} \Etol.$
    \item Add up the quadratures on each of the segments. 
\end{enumerate}
\end{strategy}
\begin{remark}
It makes sense in Step~(1) to continue bisecting as long as it reduces the cost function $N(g,\Etol)=\sum_{j=1}^m N_j$. The main gain here is that further bisection allows for larger values for $r_j$, and hence much less eccentric ellipses.
\end{remark}
\begin{remark}\label{R:strategy_beta_choice}
In practice we set $\beta\in (0,1)$ and continue splitting in Step~(1) until $\rho_j>\beta \delta_{\min,j}$ and set $\delta=\beta\rho_j$.
Experimentally, a value of $\beta = 0.912$ seems to work well, which has some asymptotic justification; see Remark~\ref{R:beta_choice}.
\end{remark}
\begin{remark}
For Step~(4), we find in the proof of Proposition~\ref{P:N1_asymptotics}, distributing the tolerance uniformly over the segments is asymptotically optimal. Alternatively, one can weight each tolerance by the length of the interval, so approximate to within $\tfrac{\abs{z_j - z_{j-1}}}{2}\Etol$.
This has the advantage that tolerance is independent on how other segments are split up. In practical situations, the difference in performance between the two approaches is minimal.
\end{remark}

The concept of splitting integration segments is not new, and indeed Neurohr provides one algorithm for splitting line segments in \cite[Algorithm 4.7.3]{NeurohrThesis}, which he concludes is a beneficial strategy. Neurohr's splitting strategy contains an inherent asymmetry, and avoiding this in Strategy~\ref{strat:main} will aid in computing the asymptotic behaviour in Proposition~\ref{P:N1_asymptotics}.

In order to assess how important the path splitting is in Strategy~\ref{strat:main}, we also consider an alternative naive strategy, where we fit the path in a single, possibly quite eccentric, ellipse. In order to bound the integrand, we cover the ellipse with discs on which we can use Lemma~\ref{L:bound_on_disk}.

\begin{strategy}\label{strat:ref} $ $\\
\begin{enumerate}
	\item Set $r=\beta\min\{ \operatorname{acosh}\big(\tfrac{1}{2}(|\alpha_i-1|+|\alpha_i+1|)\big): i = 1,\ldots,d_0 \} $ for some $\beta \in (0,1)$, to ensure that the critical points $\alpha_i$ are a positive distance away from $L(-1,1,r)$. As per Remark~\ref{R:strategy_beta_choice}, $\beta=0.912$ works well in practice.
	\item Determine a collection of discs $D(z_j,\delta_j)$ that cover $L(-1,1,r)$ and do not contain any of $\alpha_1,\ldots,\alpha_{d_0}$.
	\item Use Lemma~\ref{L:bound_on_disk} to find a bound $M_j$ on the integrand on each $D(z_j,\delta_j)$.
	\item Set $M=\max M_j$.
	\item Use \eqref{E:Nbound} to obtain $N$ so that $E_N(g)\leq\Etol$.
	\item Compute $I_N(g)$.
\end{enumerate}	
\end{strategy}

\begin{example}
	An example of the possible configurations achieved by the two strategies in the case of a function with pole at $z=0.3+0.4i$ is shown in Figure \ref{fig: example_method_configurations}. 
	\begin{figure}
		\centering
		\includegraphics[]{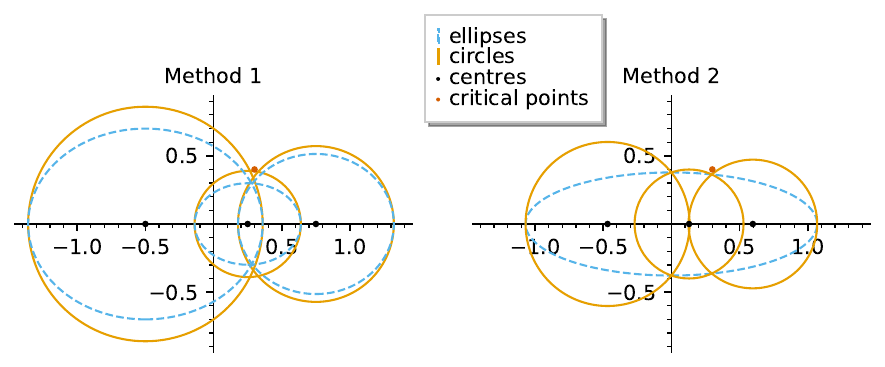}
		\caption{Example configuration of circles and ellipses for both strategies when there is a single critical point at $0.3+0.4i$.}
		\label{fig: example_method_configurations}
	\end{figure}
	Note the ellipses used in Strategy~\ref{strat:main} are much less eccentric than in Strategy~\ref{strat:ref}, corresponding to larger $r$.
\end{example}
The first strategy is similar to Petras' algorithm \cite{Petras2002}, and that used in Arb \cite{Johansson2013, Johansson2018}, though in the latter paper insufficient detail is given to recreate the method entirely.
Indeed, one could just use Arb, as was done in \cite{GaoThesis}. This works well enough to show that rigorous integration is practically viable, but discrepancies between desired use and interface prevent it from being truly competitive. For instance, variation in the integrand needs to be communicated to the library through rectangular regions rather than discs and the geometry of the critical points cannot be communicated to the library function either.

The second strategy can be considered as a rigorous version of the heuristic method \cite{Borwein2004},  where a single Gauss-Legendre quadrature is used for the entire line segment.

As we show in the next section, our knowledge about the behaviour of the integrand at critical points can in fact be used to obtain some remarkably sharp asymptotic results on the number of integration points required to obtain a given accuracy.

\section{Sensitivity to nearby critical points}\label{subsec: circle choosing}
As a test case, we analyse the performance of the two strategies for an integrand that has a pole nearby, given by
\[I_{z_0}=\int_{-1}^1 (z-z_0)^{-v}dz \quad\text{ for some }\quad v>0 \, .\]
We are particularly interested in $v<1$ because this is typical for differentials that lift to holomorphic forms on the corresponding Riemann surface. For instance, for $v=\frac{1}{2}$ we can take $w=(z-z_0)^2$ and get $(z-z_0)^{-\frac{1}{2}}dz=\frac{1}{2}dw$.

For given $\Etol$ and $v$, we consider the total number of function evaluations $N^{(1)}(z_0)$ for Strategy~\ref{strat:main} and $N^{(2)}(z_0)$ for Strategy~\ref{strat:ref} as a function of $z_0$.
\begin{figure}
    \centering
    \includegraphics[width=\textwidth]{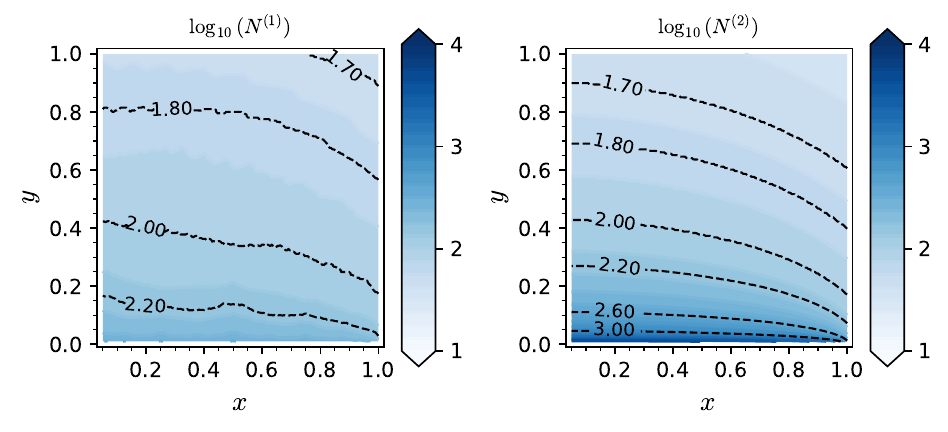}
    \caption{$\log(N^{(i)}(x+iy))$ for $x,y\in(0,1)$ for Strategies~\ref{strat:main} and \ref{strat:ref}, with $\Etol=2^{-100}$ and $v=\frac{1}{2}$.}
    \label{fig:proxy_heatmaps}
\end{figure}
    
In Figure~\ref{fig:proxy_heatmaps}, we plot $N^{(1)}(z_0)$ and $N^{(2)}(z_0)$ for $v=\frac{1}{2}$, $\Etol=2^{-100}$ with $z_0=x+iy$ for $x,y\in (0,1)$. A darker colour indicates a larger value. This already gives a good indication that, particularly for critical points close to the integration path (i.e. $y$ close to $0$), Strategy~\ref{strat:main} performs much better.
In the next two sections, we give asymptotic results confirming this.

\subsection{Asymptotics for a critical point approaching the integration path}

When we take $z_0 = iq$ for $q \in (0,1)$ we are able to compute the asymptotic behaviour for the two strategies as $q\to 0$. Note that this behaviour is not realistic for numerical computations, where one would probably ensure that $\Etol<q $. It does give us a qualitative explanation of the stark difference visible in Figure~\ref{fig:proxy_heatmaps}.

\begin{prop}\label{P:N1_asymptotics}
	For $q\to 0$ we have
	\[N^{(1)}(iq)\sim \begin{cases}
		\log(q^{-1})^2 & \text{ for }\;\; v\geq 1 \, , \\
		(\log(\log q^{-1}))^2 & \text{ for }\;\; 0 < v < 1 \, . \\		
	\end{cases}\]
\end{prop}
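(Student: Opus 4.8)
The plan is to analyze how Strategy~\ref{strat:main} bisects $[-1,1]$ when the only critical point is the pole $z_0=iq$ with $q\to 0$, and to estimate $N^{(1)}(iq)=\sum_j N_j$ via the bound \eqref{E:Nbound}. The key observation is that the bisection process from Step~(1) produces a dyadic decomposition of $[-1,1]$ that is finest near the foot of the perpendicular from $iq$ (i.e.\ near $z=0$) and coarsens geometrically as one moves away; concretely, a subinterval $[z_{j-1},z_j]$ at distance $\sim t$ from $0$ will have length $\sim t$ (up to the constant $\beta$), since that is roughly when the splitting criterion $\rho_j>\beta\,\delta_{\min,j}$ first holds --- here $\rho_j$ is essentially the distance from the midpoint to $iq$, which is $\sqrt{t^2+q^2}$. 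So near $0$ the intervals have length $\sim q$ and there are $O(1)$ of them, then the lengths grow geometrically $q,2q,4q,\dots$ until they reach $O(1)$; hence $m=O(\log(q^{-1}))$ subintervals total.

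Next I would estimate each $N_j$ using \eqref{E:Nbound}. For the subinterval at distance $\sim t$ from $0$, we can take $\delta_j\sim t$ and $r_j=\operatorname{acosh}(2\delta_j/|z_j-z_{j-1}|)$ bounded below by a constant (this is exactly the point of the symmetric splitting: the eccentricity stays uniformly controlled). The bound $M_j$ on the variation of $(z-iq)^{-v}$ on the disk $D(\cdot,\delta_j)$ is of order $t^{-v}$ for $t\gtrsim q$, and of order $q^{-v}$ on the finest (central) pieces. Plugging into \eqref{E:Nbound}, with the $\tfrac1m\Etol$ tolerance split, gives
\[
N_j \lesssim \frac{1}{2r_j}\Bigl(\log M_j + \log|z_j-z_{j-1}| + \log m + \log\Etol^{-1} + O(1)\Bigr)
\lesssim \log M_j + \log m + \log \Etol^{-1}+O(1).
\]
Since $r_j$ is bounded below, the dominant term for fixed $\Etol$ is $\log M_j + \log m$. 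Summing over the $O(\log q^{-1})$ central-region pieces where $M_j\sim q^{-v}$ contributes $O(\log q^{-1}\cdot(v\log q^{-1}+\log\log q^{-1}))$, and summing $\sum_j \log M_j\sim\sum_k v\log(2^k q)^{-1}$ over the geometric tail contributes another $O((\log q^{-1})^2)$ term. This yields $N^{(1)}(iq)=O((\log q^{-1})^2)$ when $v\ge 1$, and one checks the lower bound of the same order comes from the central pieces alone.

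For $0<v<1$ the improvement comes from the fact that one need \emph{not} split all the way down to scale $q$: because $(z-iq)^{-v}$ with $v<1$ has an integrable, mildly growing singularity, once the interval length is $\delta$ the bound $M_j$ is only $\sim\delta^{-v}$, and with $r_j$ of order one the contribution $\delta^{-v}/e^{2N_j r_j}$ to the error can be killed with $N_j\sim v\log\delta^{-1}$ evaluations; the crucial point is that the \emph{number} of pieces one must take this small is governed by requiring the total error $\sum \delta^{-v}e^{-cN_j}<\Etol$, and here one should stop bisecting as soon as $\delta^{-v}$ is comparable to $q^{-v}$ only on a handful of intervals. A careful accounting (optimizing the stopping scale against the per-interval cost, and using that the tail sum $\sum 2^{-k v}$ converges) shows the effective number of ``expensive'' intervals is $O(\log\log q^{-1})$ rather than $O(\log q^{-1})$, each costing $O(\log\log q^{-1})$, giving $N^{(1)}(iq)\sim(\log\log q^{-1})^2$. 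I would also verify here the claim in the earlier remark that distributing $\Etol$ uniformly over the $m$ segments is asymptotically optimal --- essentially because $\log m$ is a lower-order correction to $\log\Etol^{-1}$ in \eqref{E:Nbound}.

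The main obstacle I expect is making the informal ``distance-$t$ interval has length $\sim t$'' claim precise enough to pin down the constant in front of $(\log q^{-1})^2$ (the proposition asserts an asymptotic equivalence $\sim$, not just $\Theta$), which requires carefully tracking the interaction between the $\beta$-threshold in the splitting, the $\operatorname{acosh}$ in $r_j$, and the $\tfrac1m$ tolerance distribution; and, in the $v<1$ case, correctly identifying the optimal stopping scale for the bisection so that the count of small intervals is genuinely $\Theta(\log\log q^{-1})$ and not merely $O(\log q^{-1})$. Establishing the matching lower bounds --- showing Strategy~\ref{strat:main} cannot do asymptotically better --- will also need a short argument that the error bound of Theorem~\ref{T: N required} is tight enough in the relevant regime, or at least that no admissible disk covering forces $M_j$ or $r_j$ to be more favorable than claimed.
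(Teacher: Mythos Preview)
Your analysis for $v\geq 1$ is sound and matches the paper: the dyadic bisection produces $m\sim\log q^{-1}$ segments, each with $r_j$ bounded below and $N_j=O(\log q^{-1})$, giving the $(\log q^{-1})^2$ total.

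For $0<v<1$, however, your mechanism is not quite right. You write that ``one need not split all the way down to scale $q$,'' but Strategy~\ref{strat:main} \emph{forces} exactly this: Step~(1) requires $\rho_j>\delta_{\min,j}$, and for the segment containing $0$ we have $\rho_j=q$, so bisection must continue until the central half-length drops below $q$. You therefore still end up with $m\sim\log q^{-1}$ segments, and on the central ones the bound $M_j$ is genuinely $\sim q^{-v}$, not $\delta^{-v}$ for some larger stopping scale $\delta$. The actual source of the improvement in the paper is different: once the full dyadic decomposition is in place, a segment $[z_{j-1},z_j]$ at distance $\sim 2^{-j}$ from the origin satisfies
\[
\Bigl|\int_{z_{j-1}}^{z_j}(z-iq)^{-v}\,dz\Bigr|\leq |z_j-z_{j-1}|\cdot|z_j|^{-v}\sim 2^{(v-1)j},
\]
which for $v<1$ decays geometrically in $j$. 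Hence for all $j$ beyond roughly $\tfrac{1}{1-v}\log_2(m/\Etol)$ the integral over that segment is already below the allotted tolerance $\tfrac{1}{m}\Etol$, and the trivial approximation $N_j=0$ suffices. Only the outer $O(\log m)=O(\log\log q^{-1})$ segments on each side require nontrivial quadrature, and on those $N_j\sim\log m\sim\log\log q^{-1}$, yielding $(\log\log q^{-1})^2$. Your proposal gestures at integrability (``the tail sum $\sum 2^{-kv}$ converges'') and lands on the right count of expensive intervals, but misidentifies where the saving enters: it is not in halting the bisection early, but in recognizing that most of the resulting tiny central segments need zero evaluation nodes.
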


\begin{proof}
We split the integration path by iterative bisection.
We break up our integration path in $2K+1$ segments $[z_{j-1},z_j]$ in the following way.
\[
z_j=\begin{cases}
	-2^{-j} &\text{ for }j=0,\ldots,K \, , \\
	2^{-2K-1+j} &\text{ for }j=K+1,\ldots,2K+1 \, .\\
\end{cases}
\]
The centre for each of these segments is
\[
c_j=\begin{cases}
	-\frac{3}{2} 2^{-j} & \text{ for }j=1,\ldots,K \, , \\
	0 & \text{ for }j=K+1 \, , \\
	\frac{3}{2} 2^{j-2K-2} & \text{ for }j=K+2,\ldots,2K+1 \, . \\
\end{cases}
\]
We will choose $K$, $r$, and $\beta<1$ such that each segment fits inside an ellipse $L(z_{j-1},z_j,r)$ which in turn fits in a disc $D(c_j,\delta_j)$ such that the minimum distance from the disc to the point $iq$ is at least $(1-\beta)\delta_j$.
From fitting the ellipse inside the disk we obtain $|z_j-z_{j-1}|\cosh(r)\leq 2\delta_j$.

For $j=K+1$ with $c_j=0$ this means that \[2^{-K}\cosh(r)=\frac{1}{2}|z_{K+1}-z_K|\cosh{r}\leq\delta_{K+1}\leq \beta q,\]
and hence
\[K \geq -\log_2\beta +\log_2 q^{-1} + \log_2 \cosh(r).\]
	In particular, this allows us to choose $K$ on the order of $\log(q^{-1})$.

For each of the other segments, we can simply choose $\delta_j=\beta|c_j|$. In order to fit $L(z_{j-1},z_j,r)$ inside, we need $|c_j-z_j|\cosh(r)\leq \delta_j$, which means $\cosh(r)\leq 3\beta$. With $\beta<1$ this limits us to $r<1.7627\ldots$.

For $j\neq K+1$ we have that the distance of $iq$ to $D(c_j,\delta_j)$ is at least the distance to the origin, which is proportional to the length of the $j$-th integration interval. Therefore, independent of $q$, for $z\in D(c_j,\delta_j)$ we have
\[|(z-iq)^{-v}|\leq M_j= ((1-\beta)|c_j|)^{-v} = (3(1-\beta)|c_j-z_j|)^{-v} \, . \]

In order to approximate the full integral to within a tolerance $\Etol$, we use Gauss-Legendre quadrature on each interval with
\[N_j= \frac{1}{2r} \psquare{ \log\pround{\pi + \frac{64}{15\pround{e^{2r}-1}}} + \log M_j + \log\abs{c_j-z_j}- \log \gamma_j\Etol} \, , \]
where $\sum \gamma_j\leq 1$ with $0< \gamma_j\leq 1$. Concentrating on the part that depends on $q^{-1}$, we see that
\[N_j \sim (1-v)\log|c_j-z_j|-\log \gamma_j.\]
A Lagrange-multiplier argument shows that for the unrestricted minimisation of $\sum_j N_j$, the choice $\gamma_j = \frac{1}{2K+1}$ is optimal. With this choice, we have
\[N_j=N_{2K+2-j} \sim \log(2K+1)-(1-v)j\log 2 \;\text{ for }\; 1 \leq j \leq K.\]
Similarly, we find that
\[N_{K+1}\sim \log(2K+1)-(1-v)\log q^{-1}.\]
For $v\geq 1$, we see that $\sum N_j$ consists of $2K+1 \sim \log q^{-1}$ terms of size on the order of $\log q^{-1}$, so indeed we find
\[\sum N_j \sim (\log q^{-1})^2 \;\text{ for }\; v \geq 1.\]

For $0<v<1$ the asymptotic formula for $N_j$ can turn negative for $j$ close to $K+1$.
This reflects the fact that
 \[ \left \lvert \int_{z_{j-1}}^{z_{j}} (z-iq)^{-v}dz \right \lvert \leq q^{-v}|z_j-z_{j+1}|,\]
which for $j=K$ yields $q^{-v}2^{1-K}$. 
Hence, for $K$ such that
\begin{equation}\label{E:cutoff_K_inequality}
q^{-v}2^{1-K}\leq \frac{1}{2K+1}\Etol,
\end{equation}
we can leave out the central integral. Similarly, we have
\[ \left \lvert \int_{z_{j-1}}^{z_j}(z-iq)^{-v} \right \lvert \leq |z_j-z_{j-1}||z_j|^{-v}=2^{j(v-1)} \text{ for } j=1,\ldots, K,\]
so for $K\geq j \geq \frac{1}{1-v}(\log_2 (2K+1)-\log_2 \Etol)$ the approximation $0$ falls within the accuracy allotted to the segment. Using that our bounds are symmetric under $j\mapsto 2K+2-j$, we find
\[\sum_{j=1}^{\frac{1}{1-v}(\log_2 (2K+1)-\log_2 \Etol)}2 N_j\leq {\frac{2}{1-v}(\log_2 (2K+1)-\log_2 \Etol)} \log(2K+1)\sim \log(2K+1)^2.\] 
For $0<v<1$, the equation~\eqref{E:cutoff_K_inequality} is asymptotically satisfied by $K\sim \log q^{-1}$, so we find that for $v<1$, we can suffice with $\sum N_j \sim (\log\log q^{-1})^2$.
\end{proof}

\begin{prop}\label{P:N2_asymptotics}
	For $q\to 0$ we have $N^{(2)}(iq)\sim q^{-1}\log (q^{-1})$.
\end{prop}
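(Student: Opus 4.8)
The plan is to follow Strategy~\ref{strat:ref} exactly with $z_0 = iq$ and track everything through as $q \to 0$. The key geometric input is Step~(1): the eccentricity parameter $r$ is $(1-\epsilon)$ times $\operatorname{acosh}\big(\tfrac12(|z_0-1|+|z_0+1|)\big)$, and since $|iq-1|+|iq+1| = 2\sqrt{1+q^2} = 2 + O(q^2)$, we get $\cosh(r/(1-\epsilon)) = 1 + O(q^2)$, hence $r \sim \epsilon' q$ for a suitable constant depending on $\epsilon$ — in any case $r = \Theta(q)$. This is the whole story: the single ellipse is forced to be extremely thin, $r$ shrinks linearly in $q$, and the $\tfrac{1}{2r}$ prefactor in \eqref{E:Nbound} already contributes a factor $\Theta(q^{-1})$ to $N$.

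Next I would bound the variation $M = \max_j M_j$ over a disc cover of $L(-1,1,r)$. The point $iq$ lies just outside the (very thin) ellipse, at distance $\Theta(q)$ from it; a disc in the cover near the origin has radius $\delta_j$ comparable to its distance to $iq$, so $M_j$ is of order $q^{-v}$ for the worst disc, and in fact $\log M \sim v\log q^{-1}$. Plugging into \eqref{E:Nbound}: the bracket is $\log M + O(\log r^{-1}) + O(1) - \log\Etol \sim v\log q^{-1} + \log q^{-1} = (v+1)\log q^{-1}$ up to lower-order terms (the $\log(64/(15(e^{2r}-1)))$ term contributes $\log(1/r) \sim \log q^{-1}$, which is absorbed), while the prefactor $\tfrac{1}{2r} \sim C q^{-1}$. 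Multiplying, $N^{(2)}(iq) \sim q^{-1}\log(q^{-1})$, as claimed — the constants in $r \sim cq$ and in $\log M$ all wash out into the $\sim$.

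I would organize this as: (i) compute $r$ asymptotics from the \texttt{acosh} formula via the Taylor expansion of $\cosh$; (ii) show the disc cover forces $\log M = v\log q^{-1} + O(1)$ — here I need to check both that some disc genuinely has radius $\Theta(q)$ (so $M$ really is that large, giving the lower bound on $N$) and that no disc does worse (for the upper bound), which is where one must be a little careful about how the cover is chosen; (iii) substitute into \eqref{E:Nbound} and collect orders, noting $\log(1/(e^{2r}-1)) = \log(1/r) + O(1) = \log q^{-1} + O(1)$ gets absorbed by the dominant $\tfrac1r \log q^{-1}$ product. The main obstacle is step~(ii): Strategy~\ref{strat:ref} does not pin down the disc cover, so to get a clean two-sided asymptotic one has to argue that \emph{any} reasonable cover of the thin ellipse $L(-1,1,r)$ — whose width near the origin is $\Theta(r^2) = \Theta(q^2)$ but which passes within $\Theta(q)$ of $iq$ — produces $\log M = v\log q^{-1}+O(1)$, independent of the cover's granularity. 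One can make this robust by covering with $O(1/r) = O(q^{-1})$ discs of radius $\Theta(r)$ along the major axis (the natural choice), checking that the one straddling the origin dominates; everything else is routine substitution.
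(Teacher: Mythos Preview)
Your approach is correct and matches the paper's: compute $r\sim cq$ from $\operatorname{acosh}(\sqrt{1+q^2})\sim q$, obtain $\log M\sim v\log q^{-1}$, and substitute into \eqref{E:Nbound} to get the $q^{-1}\log q^{-1}$ asymptotic. The paper is terser in step~(ii)---it bypasses the disc cover entirely and takes the direct proxy bound $M=(q-\sinh r)^{-v}\sim((1-\beta)q)^{-v}$ on the ellipse, so your worry about the cover's granularity is unnecessary here. One small slip: the semi-minor axis of $L(-1,1,r)$ is $\sinh r\sim r$, so the ellipse's vertical extent near the origin is $\Theta(r)=\Theta(q)$, not $\Theta(r^2)$; this does not affect your argument.
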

\begin{proof}
We need to take $r=\beta \acosh(\sqrt{1+q^2})$ for some $\beta\in(0,1)$ to ensure that $L(-1,1,r)$ does not contain a critical point. Taking a series expansion gives $r \sim \beta q$. Our proxy bound then gives $M = \pround{q-\sinh(r)}^{-v} \sim ((1-\beta)q)^{-v}$, so we find
\begin{equation}\label{E:N-expansion simple case}
\begin{aligned}
    N^{(2)}(iq) &\sim \frac{1}{2\beta q} \psquare{\log \pround{\pi + \frac{32}{15 \beta q}} + v\log(q^{-1}) - v\log(1-\beta) - \log \Etol} \, ,  \\ 
    &\sim \frac{1}{2\beta} q^{-1}\psquare{(1+v)\log(q^{-1}) - \log(\beta(1-\beta)^v)-\log\Etol} \, .
\end{aligned}
\end{equation}
For $q\to 0$, this gives the result stated.
\end{proof}

\begin{remark}
    For a more numerically realistic example, one may want to consider $\Etol <q$, for instance by setting $\Etol = cq$ for some $c\in (0,1)$. Then one has that $N^{(1)}(iq) \sim \log(q^{-1})^2$ for any $v >0$, and that still $N^{(2)}(iq) \sim q^{-1} \log(q^{-1})$. 
\end{remark}

\begin{remark}\label{R:beta_choice}
Note that \eqref{E:N-expansion simple case} also shows how one could tailor the value of $\beta$ to a given error tolerance. Ignoring the values $q$ and $\Etol$ and setting, for instance, $v=\frac{1}{2}$, one would be left with minimising $-\frac{1}{\beta} \log(\beta \sqrt{1-\beta})$, which is accomplished with $\beta\approx 0.8$. As $\log(q^{-1}) - \log(\Etol) >0$, in practice a slightly larger value will be optimal. 
\end{remark}

\subsection{Integration paths passing between close-by critical points}
\label{S:close_pair_of_critical_points}

While the analysis in Section~\ref{subsec: circle choosing} illustrates that it is important to split paths near critical points, the example itself is less convincing. Apart from the fact that the integrand allows for an easy, exact antiderivative, one can generally change the integration path within its homology class, so one can generally avoid getting close to a single critical point.

As a more realistic test case, we consider the family of integrals
\[
I_q = \int_{-1}^1 \frac{dz}{\sqrt{4z^4-(16+4q^2+q^4)z^2-q^2(4+q^2)^2}} \, , \quad q \in (0,1) \, ,
\]
which has (among others) critical points at $\pm iq$, so as we let $q$ approach $0$, the integrand has critical points close together. Thus, if our application requires us to integrate along a path that goes in between the critical values, we cannot avoid getting close to them.

The differential form integrated in $I_q$ is a regular differential on the Riemann surface given by $w^2 - \big(z^4-(4+q^2+\tfrac{1}{4}q^4)z^2-q^2(2+\frac{1}{2}q^2)^2\big)=0$ of genus $1$, so in that sense it is the kind of differential one would integrate to compute a period matrix (although for Riemann surfaces of genus $1$ there are much better methods available). 

The integrand has finite critical points $z_0=\pm iq,\pm (2+\frac{1}{2}q^2)$ and expansion at $z=iq$ yields
\[\sqrt{\frac{i}{2q^5+24q^3+32q}}(z-iq)^{-\frac{1}{2}}+\cdots\,.\]
We expect that the growth of the integrand is dominated by the behaviour near the closest critical point, which will be $z=\pm iq$, so we consider as a replacement of Lemma~\ref{L:bound_on_disk} the bound
\[\Mproxy(z_0,\delta)=q^{-\frac{1}{2}}(|z_0-\alpha|-\delta)^{-\frac{1}{2}},\]
where $\alpha$ is a critical point of $g$ closest to $z_0$. 

\begin{figure}
	\centering
	\includegraphics[width=12cm]{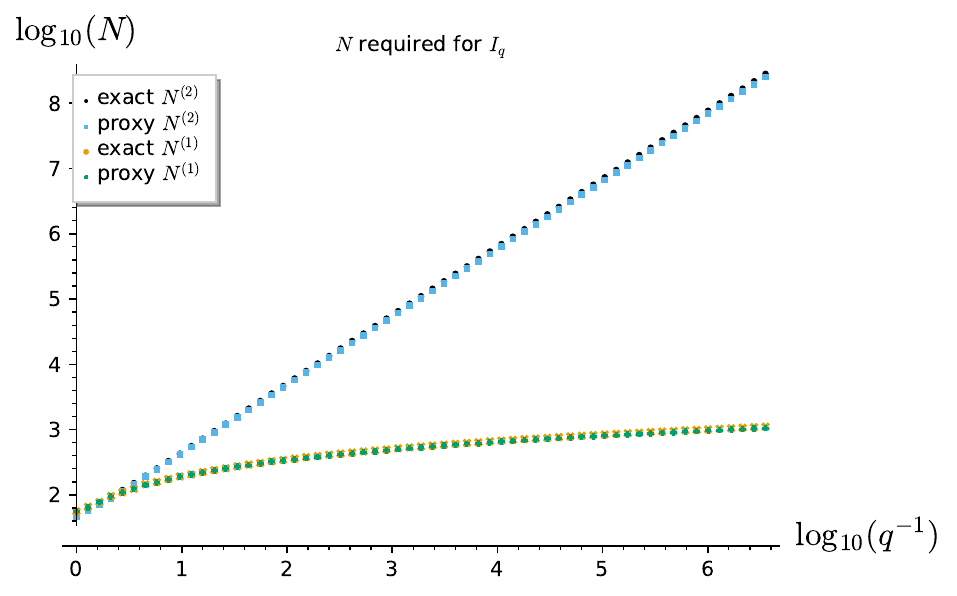}
	\caption{Proxy and actual bounds on number of nodes.}
\label{fig: N1_vs_N2_specific_curve}
\end{figure}
	
In Figure~\ref{fig: N1_vs_N2_specific_curve}, we plot the computed total $N$ for $\Etol=2^{-100}$ for Strategies~\ref{strat:main} and \ref{strat:ref}, both with $M(z_0,\delta)$ from Lemma~\ref{L:bound_on_disk} as well as $\Mproxy(z_0,\delta)$. We observe that
\begin{itemize}
	\item using $M(z_0,\delta)$ from Lemma~\ref{L:bound_on_disk} or $\Mproxy(z_0,\delta)$ hardly makes a difference, and 
	\item for Strategy~\ref{strat:ref} it looks like $N$ depends linearly on $q^{-1}$, whereas for Strategy~\ref{strat:main} it looks more like logarithmic.
\end{itemize}

We write $N^{(1)}(q)$ for the predicted value for $N$ using $\Mproxy$ when allowed splitting of the integration path versus $N^{(2)}(q)$ for when it is not. The following asymptotic result illustrates the importance of splitting paths.

\begin{prop}\label{P:N_asymptotics} For $q\to 0$, we have
$N^{(1)}(q)\sim (\log q^{-1})^2$ and $N^{(2)}(q)\sim q^{-1}\log q^{-1}$.
\end{prop}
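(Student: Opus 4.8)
The plan is to mirror the structure of the proof of Proposition~\ref{P:N1_asymptotics}, but now keeping track of the extra coefficient growth coming from the $q^{-1/2}$-prefactor in $\Mproxy$. The key observation is that, since the closest critical points are at $z=\pm iq$, the quantity $\Mproxy(z_0,\delta)=q^{-1/2}(|z_0-\alpha|-\delta)^{-1/2}$ here plays exactly the role that $M_j=((1-\beta)|c_j|)^{-v}$ played before with $v=\frac12$, \emph{except} for the additional factor $q^{-1/2}$ which is independent of $j$. So I would first set up the same bisection: split $[-1,1]$ into $2K+1$ segments $[z_{j-1},z_j]$ with $z_j=-2^{-j}$ for $j\le K$ and $z_j=2^{-2K-1+j}$ for $j>K$, with centres $c_j$ as before, choosing $\beta<1$ and $r<\operatorname{acosh}(3\beta)$ so each segment's ellipse $L(z_{j-1},z_j,r)$ fits in $D(c_j,\delta_j)$ with $\delta_j=\beta|c_j|$ (and $\delta_{K+1}\le\beta q$, giving again $K\sim\log_2 q^{-1}$).

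Next I would plug $\Mproxy$ into the bound \eqref{E:Nbound}. For the central segment $j=K+1$ the distance from $0$ to the nearest of $\pm iq$ after removing $\delta_{K+1}$ is $\gtrsim(1-\beta)q$, so $M_{K+1}\sim q^{-1/2}\cdot q^{-1/2}=q^{-1}$, whence $N_{K+1}\sim \log q^{-1}$; for $j\ne K+1$ the disc $D(c_j,\delta_j)$ has distance to $\pm iq$ at least proportional to $|c_j|\asymp|c_j-z_j|\asymp 2^{-\min(j,2K+2-j)}$, so $M_j\sim q^{-1/2}|c_j-z_j|^{-1/2}$ and $N_j\sim \tfrac12\log q^{-1}+\tfrac12\min(j,2K+2-j)\log 2$ up to $j$-independent constants. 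Running the same Lagrange-multiplier argument to distribute $\Etol$ shows $\gamma_j=\tfrac1{2K+1}$ is asymptotically optimal, contributing a further $\log(2K+1)=O(\log\log q^{-1})$ to each $N_j$. Summing the $2K+1\sim\log q^{-1}$ terms, each of size $O(\log q^{-1})$, gives $N^{(1)}(q)\sim(\log q^{-1})^2$; unlike the $v<1$ case in Proposition~\ref{P:N1_asymptotics}, the $q^{-1/2}$ prefactor now prevents the central $N_j$ from going negative, so no truncation of segments occurs and we land in the "$v\ge1$-like" regime.

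For $N^{(2)}(q)$, I would follow the proof of Proposition~\ref{P:N2_asymptotics} verbatim with the modified $M$. We must take $r=\beta\operatorname{acosh}(\sqrt{1+q^2})\sim\beta q$ so that $L(-1,1,r)$ avoids $\pm iq$; then $\Mproxy$ evaluated on this ellipse gives $M\sim q^{-1/2}((1-\beta)q)^{-1/2}\asymp q^{-1}$, and \eqref{E:Nbound} yields
\[
N^{(2)}(q)\sim\frac{1}{2\beta q}\Big[\log\Big(\pi+\frac{32}{15\beta q}\Big)+\log q^{-1}-\log\Etol+O(1)\Big]\sim\frac{1}{\beta}q^{-1}\log q^{-1},
\]
so $N^{(2)}(q)\sim q^{-1}\log q^{-1}$ as claimed (absorbing the constant into $\sim$).

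I expect the main obstacle to be purely bookkeeping rather than conceptual: carefully verifying that the $q^{-1/2}$ prefactor shifts the balance so that \emph{no} segments can be discarded (the subtlety that made the $0<v<1$ case of Proposition~\ref{P:N1_asymptotics} different), and confirming that the $\log(2K+1)$ terms from tolerance splitting are genuinely lower-order than the $\log q^{-1}$ terms so they do not affect the leading asymptotic. One should also double-check that the $j$-independent additive constants (from the $\log(\pi+\tfrac{64}{15(e^{2r}-1)})$ term and from $\log\Mproxy$'s constant part) do not accumulate to something of size $K\cdot O(1)\asymp\log q^{-1}$ that could compete — but since each such constant is multiplied by $\tfrac1{2r}=O(1)$ and there are $O(\log q^{-1})$ of them, their total is $O(\log q^{-1})$, which is indeed dominated by $(\log q^{-1})^2$.
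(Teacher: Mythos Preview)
Your approach matches the paper's proof essentially line for line: the same bisection into $2K+1$ segments for $N^{(1)}$, the same observation that the extra $q^{-1/2}$ in $\Mproxy$ adds a uniform $\tfrac12\log q^{-1}$ to each $N_j$ (preventing the truncation that occurred in the $v<1$ case of Proposition~\ref{P:N1_asymptotics}), and the identical computation for $N^{(2)}$ via $r\sim\beta q$ and $M\sim q^{-1}$. One minor bookkeeping slip: in your per-segment formula $N_j\sim\tfrac12\log q^{-1}+\tfrac12\min(j,2K+2-j)\log 2$ you have dropped the segment-length term $\log|c_j-z_j|\sim -\min(j,2K+2-j)\log 2$ from \eqref{E:Nbound}, which is $j$-dependent and flips the sign of the $j$-coefficient (the paper obtains $N_j\sim\log(2K+1)+\tfrac12(K-j)\log 2$); fortunately this does not affect the sum, which is $\sim K^2\sim(\log q^{-1})^2$ either way.
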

\begin{proof}
For $N^{(1)}$ we end up with a similar argument as in Proposition~\ref{P:N1_asymptotics}. However, due to the dependence on $q^{-1}$ in $M$ we now find
\[N_j\sim \frac{1}{2}\log\abs{c_j-z_j}+\frac{{1}}{2}\log q^{-1}-\log\gamma_j.\]
Taking $\gamma_j = \frac{1}{2K+1}$ as before, this yields
\[N_j \sim \log(2K+1) - \frac{1}{2}j \log 2 + \frac{1}{2}K \log 2 = \log(2K+1) + \frac{1}{2}\pround{K-j}\log 2,\]
and from that we obtain $\sum_j N_j \sim K^2 \sim (\log q^{-1})^2$.  

For $N^{(2)}$, we get the same argument as in Proposition~\ref{P:N2_asymptotics}, but now our proxy bound gives $M = q^{-\frac{1}{2}}\pround{q-\sinh(r)}^{-\frac{1}{2}} \sim ((1-\beta)q^2)^{-\frac{1}{2}}$, so we find
\begin{equation*}
\begin{aligned}
    N^{(2)} &\sim \frac{1}{2\beta} q^{-1}\psquare{\log(q^{-1}) - \log(\beta\sqrt{1-\beta})-\log\Etol} \, .
\end{aligned}\qedhere
\end{equation*}
\end{proof}

\section{Comparison of performance on periods of smooth plane quartic curves}\label{sec: comparison with heuristic}

As evidence that the method proposed provides improved performance in practical situations, we consider the application in \cite{Bruin2019} to computing period matrices. We check that the new rigorous methods outperform the previously implemented heuristic method from \cite{Borwein2004}.

We generated Riemann surfaces determined by quartic plane curves determined by polynomials in $\mathbb{Z}[x,y]_{\leq 4}$ with coefficients uniformly randomly chosen from $\{-10,\ldots,10\}$.
We timed the computation of their Riemann matrices with $\Etol=2^{-100}$, i.e., to roughly $100$ binary digits.

We measured the runtimes $t_1,t_2$ for Strategies~\ref{strat:main} and \ref{strat:ref} respectively, as well as $t_h$ for the previously implemented heuristic method. As unit of time, we have taken the shortest measured runtime $t_{\min}$ for the heuristic method, in an effort to find a practical measure of runtime that is somewhat independent of actual hardware. In our case, we find $t_{\min}\approx 1.2$ seconds on a Intel Core i5-8350U CPU at 1.70GHz. In Figure \ref{fig: Ratios_of_runtimes}, we plot the ratios $t_1/t_{\min}$ (black) and $t_2/t_{\min}$ (yellow) against $t_h/t_{\min}$, with logarithmic axes to compress the extremes. The ratio $t_h/t_{\min}$ measures the difficulty of the computation, while the ratio $t_i/t_{\min}$ measures the performance of Strategy $i$, with a point laying underneath the diagonal if the corresponding strategy has outperformed the heuristic approach on this computation. 

\begin{figure}
    \centering
    \includegraphics[width=5in]{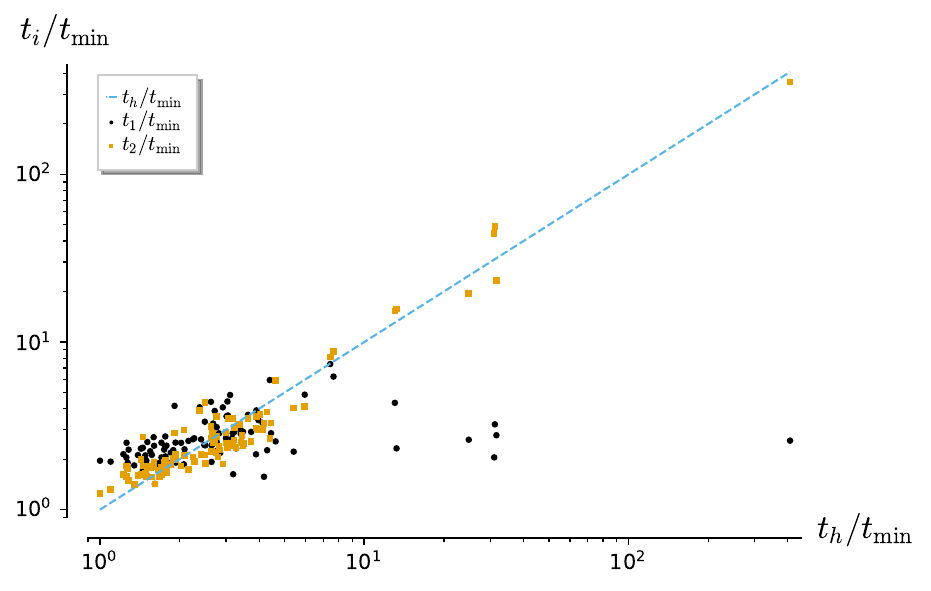}
    \caption{Ratio of runtimes of the rigorous methods ($t_i$) to the heuristic method ($t_h$). A reference time of $t_{\min} \approx 1.2$ seconds is taken.}
\label{fig: Ratios_of_runtimes}
\end{figure}
We see that generally, the curves that require only a short runtime under the heuristic method are relatively slower for the rigorous methods, as there is additional overhead for computing the $N$ required.

We see that our reference Strategy~\ref{strat:ref} generally performs comparably to the heuristic method, while our main Strategy~\ref{strat:main} performs reliably better on ``harder" examples. Where the transition occurs will depend on a variety of parameters, including the desired accuracy and the architecture of the hardware used. Experiments show that Strategy~\ref{strat:main} performs comparatively better for higher target accuracies (i.e. smaller values for $\Etol$) as well.

\section{Further Computational Considerations}

\subsection{Caching Nodes}\label{subsec: caching nodes}
Gauss-Legendre quadrature has notoriously hard to compute nodes and weights. A naive implementation using Bonnet's recursion formulas, that we refer to as REC, is asymptotically slow in that for very large orders $N$, it becomes the dominant part of computing Gauss-Legendre quadratures. Empirically, the time to complete a Gauss-Legendre quadrature $I_N$ using REC is approximately
\[\alpha (N+\gamma N^{1.7}),\]
with $\gamma$ approximately constant in the working precision, taking value 0.01.

In adaptive numerical integration approaches such as \cite{Borwein2004}, the order $N$ gets iteratively doubled until an error estimate based on the changes in approximations predicts a satisfactory accuracy. In those cases, nodes and weights for orders $N$ from only a very small set are used, so caching the relevant values is very worthwhile.

For the rigorous methods considered in the present document, where a required $N$ can be computed beforehand, there is a trade-off: one can round the $N$ used to promote reuse and cache nodes and weight, at the expense of computing some integrals to a higher order than required. We analysed the potential benefit of this in the following way.

Based on the integrals occurring in the experiments described in Section~\ref{sec: comparison with heuristic} we observed that the required $N$ to compute an integral to $D$ decimal digits accuracy is roughly shifted geometrically distributed with a minimum of about $D/25$ and a mean of about $D/2$. We validated that these estimates accurately reproduce the proportion of time spent on node computation. We assumed that 60 integrals would be computed in any given session, as this represented the average number of integrals required in the computation of the period matrix for Riemann surfaces in our sample. 

We used simulated annealing to find the best rounding strategies and determined that caching gave no advantage to this runtime metric for 100 decimal digits: node computation took up only a small proportion of the total runtime. Only at target accuracies of about 1000 decimal digits did the node computation take up to half the time.

Since we expect our code to be used mainly in scenarios where less than 1000 decimal digits are required, we opted to not implement any rounding of $N$ or caching. In other scenarios, different trade-offs may be appropriate, as we describe in the following section. Moreover, note that in our estimates we have been considering an integration routine that is vectorised, so caching strategies which cause an increasing in the number of function calls will automatically have worse performance compared to routines which do only one integration. In addition, for algebraic integrands an evaluation will likely involve a homotopy continuation method, which may be slower than for direct evaluation of a polynomial integrand for example. This will further lead to caching looking like a comparably less optimal strategy.

\subsection{Alternative Methods for Calculating Nodes}

There are other, asymptotically more efficient, algorithms to compute the nodes and weights for Gauss-Legendre integration. For instance, Neurohr \cite{NeurohrThesis} also considers the Glaser-Liu-Rohkling (GLR) algorithm for computing the nodes. This algorithm has $\mathcal{O}(N)$ complexity, as opposed to the $\mathcal{O}(N^2)$ complexity of REC, but due to overhead it is only more efficient for rather large values, as is tabulated in \cite[Table 3.2]{NeurohrThesis} by values of $N$ and working precision $D_{10}$ (in decimal digits).
For instance for $D_{10}=100, \, N=5000$, GLR can be an order of magnitude faster. These consideration, together with possible node caching strategies, should be taken into account for problems that require high accuracy.

\subsection{Alternative Integration Quadratures}
The proof of Theorem~\ref{T: N required} does not rely on Gauss-Legendre specifically and can be applied to any linear quadrature scheme for which the bound of \cite[Theorem 3.1.1]{NeurohrThesis} holds. Indeed in \cite{MolinNeurohr2019} the authors find that in the case of calculating the Abel-Jacobi map for hyperelliptic curves, the Gauss-Chebychev method is generically the best. There is thus scope for future work comparing the performance with these quadratures. 
\subsection{Rounding Error}\label{sec: rounding error}
When describing Strategies~\ref{strat:main} and \ref{strat:ref} as `rigorous', a necessary caveat must be made that (at least for the current implementation in Sage) floating point rounding in iterated summations may perturb the lower bits. Practically this is accounted for by having $\Etol$ a few orders of magnitude larger than the theoretical minimum tolerance for a given number of bits. Considering the specific case of Propositions~\ref{P:N1_asymptotics} and \ref{P:N_asymptotics}, we have approximately $\log q^{-1}$ summations, which over a practical implementation range would be less than $\log \Etol^{-1}$, and hence in this scenario it is always possible to choose a suitable number of guard digits as to ensure the rounding error is small compared to $\Etol$. To deal with these considerations completely one would likely wish to use ball arithmetic as per Arb. 
\subsection{Slow Convergence}
In our implementation, no evaluation limits were set in the event of a configuration where a large $N$ is required. This has the potential to be especially problematic in the case of Strategy~\ref{strat:main}, where the iterative splitting of the line segment could extend up to a point as to occupy a large portion of the available memory when storing the splitting. In our usage of the method, we never encountered such a situation, but in more general code one may wish to fail gracefully; see \cite{Johansson2018} for further discussions on this. 

\section*{Data Availability}
Strategy~\ref{strat:main} has been implemented in SageMath~9.6 \cite{SageMath} to integrate regular differentials on Riemann surfaces. It is used by default and can also be explicitly selected by setting \texttt{integration\_method = 'rigorous'} in the \texttt{RiemannSurface} class. The code, together with a test suite, is distributed in readable form as part of SageMath installations and can be downloaded from the main website \url{https://www.sagemath.org}.

\providecommand{\bysame}{\leavevmode\hbox to3em{\hrulefill}\thinspace}
\bibliographystyle{alpha}

\begin{thebibliography}{BBG04}
	
	\bibitem[Ahl79]{Ahlfors1979}
	L.~V. Ahlfors.
	\newblock {\em Complex analysis: an introduction to the theory of analytic
		functions of one complex variable}.
	\newblock International series in pure and applied mathematics. McGraw-Hill,
	third edition, 1979.
	
	\bibitem[BBG04]{Borwein2004}
	J.~M. Borwein, D.~H. Bailey, and R.~Girgensohn.
	\newblock {\em Experimentation in Mathematics: Computational Paths to
		Discovery}.
	\newblock A K Peters Series. Taylor \& Francis, 2004.
	
	\bibitem[BSZ19]{Bruin2019}
	Nils Bruin, Jeroen Sijsling, and Alexandre Zotine.
	\newblock Numerical computation of endomorphism rings of {J}acobians.
	\newblock {\em The Open Book Series}, 2(1):155--171, 2019.
	
	\bibitem[Fuj16]{Fujiwara1916}
	Matsusabur{\^o} Fujiwara.
	\newblock {\"U}ber die obere schranke des absoluten betrages der wurzeln einer
	algebraischen gleichung.
	\newblock {\em Tohoku Math. J.}, 10:167--171, 1916.
	
	\bibitem[Gao20]{GaoThesis}
	W.~E. Gao.
	\newblock Certified computation of periods of algebraic {R}iemann surfaces.
	\newblock Bachelor's thesis (unpublished), Simon Fraser University, 2020.
	
	\bibitem[J{\etalchar{+}}13]{Johansson2013}
	Fredrik Johansson et~al.
	\newblock {\em mpmath: a {P}ython library for arbitrary-precision
		floating-point arithmetic (version 0.18)}, 2013.
	\newblock \url{http://mpmath.org/}.
	
	\bibitem[Joh18]{Johansson2018}
	Fredrik Johansson.
	\newblock {Numerical integration in arbitrary-precision ball arithmetic}.
	\newblock In J.~Davenport, M.~Kauers, G.~Labahn, and J.~Urban, editors, {\em
		{Mathematical Software -- ICMS 2018}}, volume 10931 of {\em Lecture Notes in
		Computer Science}, pages 255--263. {Springer}, 2018.
	
	\bibitem[Kra16]{Kranich2016}
	Stefan Kranich.
	\newblock An epsilon-delta bound for plane algebraic curves and its use for
	certified homotopy continuation of systems of plane algebraic curves.
	\newblock arXiv:1505.03432, 2016.
	
	\bibitem[MN19]{MolinNeurohr2019}
	Pascal Molin and Christian Neurohr.
	\newblock Computing period matrices and the {A}bel-{J}acobi map of
	superelliptic curves.
	\newblock {\em Math. Comp.}, 88(316):847--888, 2019.
	
	\bibitem[Neu18]{NeurohrThesis}
	C.~Neurohr.
	\newblock {\em Efficient integration on {R}iemann surfaces \& applications}.
	\newblock PhD thesis, Carl von Ossietzky Universität Oldenburg, 2018.
	
	\bibitem[Pet02]{Petras2002}
	Knut Petras.
	\newblock Self-validating integration and approximation of piecewise analytic
	functions.
	\newblock {\em J. Comput. Appl. Math.}, 145(2):345--359, 2002.
	
	\bibitem[Rab69]{Rabinowitz1969}
	Philip Rabinowitz.
	\newblock Rough and ready error estimates in {G}aussian integration of analytic
	functions.
	\newblock {\em Commun. ACM}, 12(5):268--270, 1969.
	
	\bibitem[Sag22]{SageMath}
	{\em {S}ageMath, the {S}age {M}athematics {S}oftware {S}ystem ({V}ersion 9.6)},
	2022.
	\newblock \url{ https://www.sagemath.org}.
	
	\bibitem[Ser19]{Sertoz2019}
	Emre~Can Sert\"{o}z.
	\newblock Computing periods of hypersurfaces.
	\newblock {\em Math. Comp.}, 88(320):2987--3022, 2019.
	
\end{thebibliography}
\newcommand{\etalchar}[1]{$^{#1}$}

\end{document}